\newcommand{\eps}{\varepsilon}
\csxdef\expandafter{bb\letter}{\noexpand\mathbb{\letter}}
\csxdef\expandafter{mb\letter}{\noexpand\mathbf{\letter}}
\csxdef\expandafter{mc\letter}{\noexpand\mathcal{\letter}}
\newcommand{\WG}{\mathbf{WG}}
\newcommand{\LEG}{\mathbf{LEG}}
\newcommand{\SD}{\mathbf{SD}}
\newtheorem{theorem}{Theorem} 
\newtheorem{lemma}{Lemma} 
\newtheorem{conjecture}{Conjecture} 
\newtheorem{corollary}{Corollary} 
\newtheorem{proposition}{Proposition} 
\theoremstyle{definition}
\newtheorem{definition}{Definition} 
\newtheorem*{convention}{Convention}
\theoremstyle{remark}
\newtheorem{remark}{Remark} 
\DeclareMathOperator{\Vect}{Vect}
\DeclareMathOperator{\codim}{codim}
\author{Nataliya Goncharuk\thanks{University of Toronto at Mississauga, 3359 Mississauga Road, Mississauga, ON L5L 1C6}\thanks{The work of both authors is partially supported by RFBR grant No. 20-01-00420.} \and Yury G. Kudryashov\footnotemark[1]}
\title{Families of vector fields with many numerical invariants}
\begin{document}

\maketitle

\begin{abstract}
    We study bifurcations in finite-parameter families of vector fields on \(S^2\).
    Recently, Yu.\,Ilyashenko, Yu.\,Kudryashov, and I.\,Schurov provided examples of (locally generic) structurally unstable \(3\)-parameter families of vector fields: topological classification of these families admits at least one numerical invariant.
    They also provided examples of \((2D+1)\)-parameter families such that the topological classification of these families has at least \(D\) numerical invariants and used those examples to construct families with functional invariants of topological classification.

    In this paper, we construct locally generic \(4\)-parameter families with any prescribed number of numerical invariants and use them to construct \(5\)-parameter families with functional invariants of the form \((\bbR, a)\to(\bbR^{D}, b)\). We also describe a locally generic class of \(3\)-parameter families with infinitely many numerical invariants of topological classification. As far as we know, this example does not lead to families with functional invariants.
\end{abstract}

\section{Introduction}%
\label{sec:intro}

It is well-known that a generic vector field on $S^2$ is structurally stable, see~\cite{AP37,Ba52,P59,P62,P63,ALGM66:en,ALGM67:en}.
One of the goals of bifurcation theory is to study and classify generic bifurcations of \emph{degenerate} planar vector fields in generic finite-parameter families.

V.\,Arnold\footnote{The conjecture appears in~\cite[Sec. I.3.2.8]{AAIS94} with the following remark: \enquote{Certainly proofs or counterexamples to the above conjectures are necessary for investigating nonlocal bifurcations in generic \(l\)-parameter families}. According to Yu.\,Ilyashenko, this section was written entirely by V.\,Arnold.} conjectured that a generic $k$-parameter family of vector fields on $S^2$ is structurally stable: generic close families experience equivalent bifurcations (see \autoref{sec:equivs} for different formalizations of the notion of equivalent bifurcations).
This conjecture turned out to be wrong.
In~\cite{IKS-th1}, Yu.\,Ilyashenko, Yu.\,Kudryashov, and I.\,Schurov show that locally generic 3-parameter unfoldings of the polycycle \enquote{tears of the heart} (see \autoref{fig:TH}) are structurally unstable.
In~\cite{GK-phase}, we show that the classification of such unfoldings may have arbitrarily many numerical invariants.

\begin{figure}
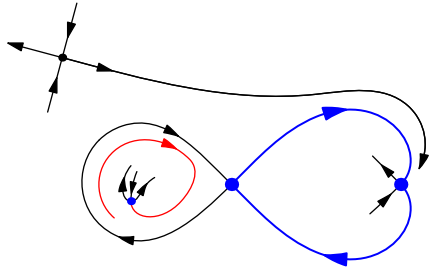

    \centering
    \asyinclude{TH}
    \caption{A vector field with a polycycle \enquote{tears of the heart}}%
    \label{fig:TH}
\end{figure}

In~\cite{GKS-glasses}, N.\,Solodovnikov and we prove that 3-parameter unfoldings of vector fields with \enquote{ears} and \enquote{glasses} separatrix graphs (see \autoref{fig:ears-glasses}) are also structurally unstable, and the classification of these unfoldings has numerical invariants.

In this paper, we construct several degenerate vector fields such that \enquote{ears} and \enquote{glasses} separatrix graphs are generated in their unfoldings.
This enables us to construct the following.
\begin{description}
  \item[\autoref{thm:pc:seq}:] Generic 3-parameter families with infinitely many invariants of classification.
  \item[\autoref{cor:pc:seq}:] The class of degenerate (codimension \(3\)) vector fields such that each vector field of this class has infinitely many different $3$-parameter unfoldings.
    The classification of these unfoldings has infinitely many invariants.
  \item[\autoref{thm:LEG-num}:] Generic 4-parameter families with arbitrarily many invariants of classification.
  \item[\autoref{thm:LEG-func}:] Generic 5-parameter families with functional invariants of classification.
\end{description}

It looks like \autoref{thm:pc:seq} is stronger than \autoref{thm:LEG-num}.
However invariants in \autoref{thm:pc:seq} and \autoref{thm:LEG-num} have a different nature.
In \autoref{thm:pc:seq}, invariants depend on the unfolding of a degenerate vector field.
In particular, different generic unfoldings of the same vector field are not equivalent, see \autoref{cor:pc:seq}.
However when we add more parameters, these invariants disappear.

Invariants in \autoref{thm:LEG-num} only depend on the unperturbed vector field and survive when we add more parameters.
We use this to construct functional invariants in 5-parameter families for \autoref{thm:LEG-func}, using a method developed in~\cite{IKS-th1}, see also \autoref{cor:semi-stable-functional}.
\autoref{thm:LEG-func} improves~\cite[Theorem 2]{IKS-th1}, where functional invariants were discovered for 6-parameter unfoldings of the \enquote{tear-luna-heart} polycycle.
Recently, A.\,Dukov and Ilyashenko found another construction that leads to 5-parameter families with functional invariants~\cite{Dukov-nonlocal,DuYuI-nonlocal}.

\begin{figure}
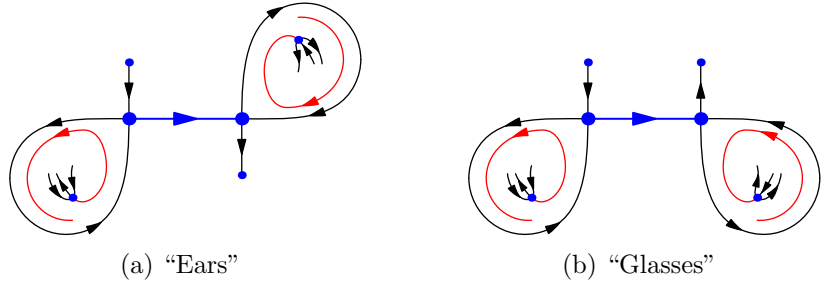

    \centering
    \subcaptionbox{\label{fig:ears}\enquote{Ears}}{\asyinclude{ears}}\hfil
    \subcaptionbox{\label{fig:glasses}\enquote{Glasses}}{\asyinclude{glasses}}
    \caption{\label{fig:ears-glasses}
      Vector fields with \enquote{ears} and \enquote{glasses}
      }
\end{figure}

\section{Preliminaries}%
\label{sec:prelims}

\subsection{Families and equivalences}%
\label{sec:equivs}
By \(\Vect\) we denote the Banach space of \(C^3\)-smooth vector fields on the two-sphere.
\begin{definition}
    Given a~non-empty open subset \(\mcB\subset\bbR^k\), a map \(V\colon \mcB\to\Vect\), \(V=\set{v_{\alpha}}_{\alpha\in\mcB}\) is called a \emph{\(k\)-parameter family of vector fields}.
    A \emph{local family} is a germ of a map \(V \colon (\bbR^k, 0)\to(\Vect, v_0)\).
    Denote by \(\mcV_{k}\) the Banach space of local families \(V=\set{v_{\alpha}}_{\alpha\in(\bbR^{k},0)}\) that are \(C^{3}\)-smooth in \((\alpha, x)\).
\end{definition}

\begin{definition}%
    \label{def:vf-eq}
    Vector fields  \(v,w\in \Vect\) are called \emph{orbitally topologically equivalent} if there exists an orientation preserving homeomorphism \(H\colon S^2\to S^2\) that takes phase curves of \(v\) to phase curves of \(w\) and preserves time orientation.
\end{definition}

\begin{definition}%
    \label{def:weak-eq}
    Local families \(V=\set{v_{\alpha }}_{\alpha \in (\bbR^{n},0)}\) and \(\tilde V=\set{\tilde v_{\tilde \alpha }}_{\tilde \alpha \in (\bbR^{n},0)}\) are called \emph{weakly topologically equivalent} if there exists a germ of a homeomorphism \(h\colon (\bbR^{n},0)\to (\bbR^{n},0)\) of the parameter spaces such that for sufficiently small $\alpha$, vector fields $v_\alpha$ and $\tilde v_{h(\alpha)}$ are orbitally topologically equivalent.
\end{definition}
The notion of weak equivalence does not completely agree with our intuitive understanding of \enquote{equivalent} bifurcations. When we study bifurcations, we use descriptions like \enquote{this degenerate singular point of $v_0$ splits into two}, \enquote{these limit cycles of $v_{\alpha}$ land on that polycycle of $v_0$} etc.  Such  descriptions do not survive under weak equivalence, because a continuous family of  singular points (or limit cycles) of $v_{\alpha}$ can be mapped to a discontinuous family of singular points (limit cycles) of $\tilde v_{h(\alpha)}$.

This motivated the notion of \emph{moderate} equivalence proposed by Yu.\,Ilyashenko in~\cite{IKS-th1}.
For moderate equivalence, the conjugating homeomorphism $H_\alpha$ is required to be continuous with respect to $\alpha$ on a certain \enquote{interesting} part of the phase portrait.
Moderate equivalence respects the descriptions of bifurcations mentioned above.
Moderate equivalence and other similar equivalences were used in~\cite{IKS-th1, GI-LBS,GK-phase,GKS-glasses,GIS-semistable}.
For a comparison of different equivalence relations on the space of families of vector fields see~\cite{GI-equivs}.

The formal statement of the above mentioned Arnold's conjecture is the following:
\begin{conjecture}[Arnold, {\cite[Sec. I.3.2.8]{AAIS94}}]%
    \label{conj-Arnold}
    Generic finite-parameter families of vector fields on the two-sphere are structurally stable: close families are weakly topologically equivalent.
\end{conjecture}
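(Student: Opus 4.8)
The plan is not to prove \autoref{conj-Arnold} but to refute it; the rest of this paper is devoted to exactly this, and the strategy I would follow is the one underlying \autoref{thm:LEG-num} and \autoref{thm:pc:seq}. To contradict structural stability it suffices to exhibit a single locally generic \(k\)-parameter family \(V=\set{v_\alpha}_{\alpha\in(\bbR^k,0)}\) together with a real number attached to \(V\) that (i) is a topological invariant of the family, so weakly equivalent families share its value, yet (ii) varies continuously and nonconstantly as \(V\) moves in \(\mcV_k\). Then arbitrarily close to \(V\) there are families with a different value of the invariant, hence not weakly topologically equivalent to \(V\), so \(V\) is structurally unstable and the conjecture fails.

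The invariant should come from the semilocal return dynamics near a degenerate separatrix graph. Concretely, I would start from a vector field \(v_0\) of codimension \(k\) whose generic unfoldings generate the \enquote{ears} and \enquote{glasses} graphs of \autoref{fig:ears-glasses}, for which \cite{GKS-glasses} already produces invariants. Near such a graph the monodromy is a composition of Dulac transition maps \(x\mapsto C x^{\lambda_i}(1+o(1))\) across the hyperbolic saddles, where the \(\lambda_i>0\) are the hyperbolicity ratios of the saddles. The topological type of the resulting bifurcation diagram — the positions of the limit-cycle and connection bifurcation surfaces in the base — is governed by the \(\lambda_i\) and by the relative order of the incoming separatrices. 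From these I would extract a ratio-type quantity, e.g.\ a combination of the \(\lambda_i\) or a cross-ratio of separatrix entry points, that is continuous in \(V\) and can be shifted by a \(C^3\)-small perturbation of \(v_0\).

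The core of the argument is to show that this quantity is genuinely a \emph{topological} invariant of the family. Any weak equivalence \(h\) must respect the combinatorial structure of the separatrix graph and the incidence pattern of the bifurcation surfaces in the base; tracing how the fiberwise homeomorphisms act on the semilocal monodromy, one checks that they cannot alter the extracted ratio, since it is read off from topological data alone — the asymptotics of the transition maps and the order in which separatrix connections occur. A transversality check then certifies that the constructed unfolding is locally generic in \(\mcV_k\), yielding a locally generic structurally unstable family. Finally, adding parameters and running the \enquote{semi-stable cycle} machinery of \cite{IKS-th1} (cf.\ \autoref{cor:semi-stable-functional}) upgrades a one-parameter continuum of such numerical invariants into a functional one.

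The main obstacle is precisely step (i): proving that the candidate number survives \emph{all} weak equivalences rather than merely smooth ones. A homeomorphism has far more freedom than a diffeomorphism, and a naive smooth invariant such as a bare eigenvalue ratio is typically destroyed under topological conjugacy; one must instead isolate a combination that is forced by the topology of the bifurcation diagram, and simultaneously rule out that a homeomorphism of the base \(\mcB\) could reparametrize the family so as to absorb the modulus. Controlling the Dulac asymptotics uniformly in \(\alpha\), so that the semilocal return map is understood to the precision needed to detect the invariant, is the technical heart of this step.
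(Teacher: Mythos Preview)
There is no proof in the paper to compare against: \autoref{conj-Arnold} is recorded purely as historical context and is immediately followed by the remark that it was disproved in~\cite{IKS-th1} (for moderate equivalence). The paper never attempts to prove it, nor does it give a self-contained disproof; its theorems are further refinements of the already-known failure. So your ``proposal'' is not competing with any argument in the text --- you correctly recognise that the only sensible task is refutation, and the strategy you sketch (build a codimension-\(k\) class whose generic unfoldings produce \enquote{ears}/\enquote{glasses}, then invoke the invariant \(\ln\lambda/\ln\rho\) from~\cite{GKS-glasses}) is exactly the mechanism behind Theorems~\ref{thm:pc-n-m}, \ref{thm:pc:seq}, and~\ref{thm:LEG-num}.

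One genuine gap in your outline: you claim the invariant will distinguish families up to \emph{weak} topological equivalence as in \autoref{def:weak-eq}, but the paper is explicit that all its invariance statements (\autoref{thm:ears-glasses}, \autoref{thm:eg-closure}, etc.) are proved only for weak equivalence \emph{with \(Sep\)-tracking} (\autoref{def:tracking}); see the Convention at the end of \autoref{sub:marked}. A bare weak equivalence need not send the marked saddles \(L,R,I_L,I_R\) to their counterparts, and without that the argument that \(\ln\lambda/\ln\rho\) is preserved collapses. The paper's fix --- adding non-bifurcating ``tags'' (nests of hyperbolic cycles) so that any orbital equivalence is forced to respect the marking --- is mentioned in \autoref{sub:marked} but never carried out; if you want an honest counterexample to \autoref{conj-Arnold} as stated, you must either implement that tagging or appeal to the original~\cite{IKS-th1} construction. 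Your sketch does not address this, and the sentence ``weakly equivalent families share its value'' is precisely the step that would fail without it.
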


Arnold's conjecture was disproved in~\cite{IKS-th1} for \emph{moderate} equivalence instead of weak equivalence of families.

In this paper we will use the definition of \emph{weak topological equivalence with $Sep$-tracking}, also used in~\cite{GKS-glasses}.
The idea is to \enquote{track} a certain set of marked saddles and separatrices of vector fields, and to consider conjugacies that take marked saddles and  separatrices of $v_\eps$ to marked saddles and separatrices of $\tilde v_{h(\eps)}$.
The formal definition appears in the next section.

\subsection{Vector fields with marked saddles and separatrices}%
\label{sub:marked}

Consider two weakly equivalent local families of vector fields \(V\), \(\tilde V\).
Suppose that we want to prove that this equivalence implies equality of some numerical quantities.
Quite often the proof relies on the fact that the orbital topological equivalence \(H_{\alpha }\) of \(v_{\alpha}\) and \(\tilde v_{h(\alpha)}\) sends some saddle points and separatrices of~\(v_{\alpha }\) to the \enquote{corresponding} saddle points and separatrices of~\(\tilde v_{h(\alpha )}\).

Formally, let \(\widehat\Vect\) be the Banach manifold of vector fields \(v\in\Vect\) with an ordered tuple of distinct hyperbolic saddles \(S_{1}, \dotsc , S_{N}\) and their local separatrices \((\gamma _{i,j}, S_{i})\), \(i=1,\dotsc ,N\), \(j=1,\dotsc ,4\).
We call these saddles and separatrices \emph{marked}.
Let \(\widehat\mcV_{k}\) be the set of \(k\)-parameter local families of vector fields with marked hyperbolic saddles and separatrices;
formally, \(V\in\mcV_{k}\) is a germ of a continuous map \(V\colon (\bbR^{k}, 0)\to (\widehat\Vect, v_{0})\) such that \(v_{\alpha}(x)\) is \(C^{3}\)-smooth in \((\alpha, x)\), and the marked saddles and separatrices are continuous in \(\alpha\).
Given a family \(V\in\mcV_{k}\) and a choice of marked saddles and separatrices of \(v_{0}\), there is a unique way to lift this family to a family \(\hat V\in\widehat\mcV_{k}\) that agrees with this choice.

The following definition is a natural modification of Definitions~\ref{def:vf-eq} and~\ref{def:weak-eq} for vector fields with marked saddles and separatrices.
For bifurcations studied in this paper, the definition below is the best approximation to our intuitive concept of \enquote{same} bifurcations.
Unlike the weak equivalence, it respects the description of bifurcations in terms of the behaviour of saddle separatrices of perturbed vector fields.

\begin{definition}%
    \label{def:tracking}
    Two vector fields with marked saddles and separatrices \(v, \tilde v\in \widehat\Vect\) are called \emph{orbitally topologically equivalent with \(Sep\)-tracking}, if there exists an orientation preserving homeomorphism \(H\colon S^{2}\to S^{2}\) that sends trajectories of \(v\) to trajectories of \(\tilde v\), each marked saddle \(S_{i}\) of \(v\) to the corresponding saddle \(\tilde S_{i}\) of \(\tilde v\), and each marked separatrix \(\gamma_{i,j}\) of \(v\) to the corresponding marked separatrix of \(\tilde v\).

    Two local families \(V\colon (\bbR^{k}, 0)\to(\widehat\Vect, v_{0})\), \(\tilde V\colon (\bbR^{k}, 0)\to(\widehat\Vect, \tilde v_{0})\) of vector fields with marked saddles and separatrices are called \emph{weakly topologically equivalent with \(Sep\)-tracking}, if there exists a germ of a homeomorphism \(h\colon(\bbR^{k},0)\to(\bbR^{k},0)\) such that \(v_{\alpha}\) is orbitally topologically equivalent with \(Sep\)-tracking to \(\tilde v_{h(\alpha)}\).
\end{definition}

We can obtain this tracking property in two different ways.
One approach is to add this as an explicit requirement on the weak equivalence in the statement of a theorem.

Another approach would be to add extra elements (\enquote{tags}) to the phase portraits of~\(v_{\alpha }\) so that any weak equivalence between \(V\) and~\({\tilde V}\) will automatically track the marked saddles and separatrices.
For example, we can add several nests with different amounts of hyperbolic limit cycles.
Then the weak equivalence must map the nests of~\(V\) to the corresponding nests of \({\tilde V}\), and we can identify separatrices by the nests they wind onto.

With the latter approach, we can use the classical definition of weak equivalence instead of \autoref{def:tracking}, but we need to add extra \enquote{tags} to the phase portrait, and these \enquote{tags} do not actually bifurcate.

\begin{convention}
    In this paper we will only formulate and prove theorems about weak topological equivalence with \(Sep\)-tracking.
    In particular, whenever we say that two families \(V, \tilde V\colon(\bbR^{k}, 0)\to\widehat\Vect\) are equivalent, we mean that they are weakly topologically equivalent with \(Sep\)-tracking.
\end{convention}

\subsection{Invariant functions and numerical invariants}
Let \(\mbM\subset\widehat\Vect\) be a Banach submanifold, \(\codim \mbM<\infty\);
let \(k\) be a natural number, \(k\ge\codim \mbM\).
Denote~by~\(\mbM^{\pitchfork,k}\subset\widehat\mcV_{k}\) the set of local families~\(V\) such that \(v_0\in\mbM\) and \(V\) is transverse to~\(\mbM\) at~\(v_0\).
All numerical invariants of local families constructed in~\cite{IKS-th1,GK-phase,GKS-glasses} follow the same pattern:
they have the form \(V\mapsto\varphi(v_{0})\), where \(\varphi\colon \mbM\to\bbR\) is an \emph{invariant function} in the following sense.
\begin{definition}
    [{cf.~\cite[Definition 16]{IKS-th1}}]%
    \label{def:inv-func}
    A~function \(\varphi\colon \mbM\to \bbR\) defined on a Banach submanifold of~\(\widehat\Vect\) is called \emph{invariant}, if for any two equivalent local families \(V, \tilde V\in\mbM^{\pitchfork,\codim \mbM}\) we have \(\varphi(v_{0})=\varphi(\tilde v_{0})\).

    A function \(\varphi\colon \mbM\to \bbR\) is called \emph{robustly invariant}, if the same equality holds for any two equivalent families \(V, \tilde V\in\mbM^{\pitchfork,k}\) with \(k\ge\codim \mbM\).
\end{definition}

The inequality \(k\ge\codim\mbM\) is motivated by the fact that we are actually interested in bifurcations that occur in generic non-local (in parameter) families \(V\colon\mcB\to\widehat\Vect\), \(\mcB\subset\bbR^{k}\), and a generic \(k\)-parameter family with \(k<\codim\mbM\) has no common points with~\(\mbM\).

Another assumption required to transfer instability theorems to bifurcations of non-local (in parameter) families is that our submanifold~\(\mbM\) should be \emph{topologically distinguished} in the following sense.
\begin{definition}
    [{cf.~\cite[Definition 16]{IKS-th1}}]%
    \label{def:top-disting}
    We say that a Banach submanifold \(\mbM\subset\widehat\Vect\) is \emph{topologically distinguished} in its neighbourhood~\(\mcU\supset\mbM\), if two vector fields \(v\in\mbM\) and \(w\in\mcU\setminus\mbM\) cannot be orbitally topologically equivalent with \(Sep\)-tracking.
\end{definition}

The following theorem almost immediately follows from the definitions, see also~\cite[Sec. 2.2.2]{GKS-glasses}.
\begin{theorem}
    Consider a Banach submanifold \(\mbM\subset\widehat\Vect\), \(\codim \mbM<\infty\).
    Suppose that \(\mbM\) is topologically distinguished in its neighbourhood~\(\mcU\), and \(\mbM\) has an invariant function \(\varphi\colon\mbM\to \bbR\).

    Then there exists a non-empty open subset~\(\mbM^{\pitchfork}_{nonloc.}\) of the space of \(k\)-parameter non-local families~\(V\colon\mcB\to\widehat\Vect\), \(\mcB\subset\bbR^{k}\), \(k=\codim \mbM\), such that the classification of families from this set has a numerical invariant.
\end{theorem}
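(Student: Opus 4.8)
The plan is to turn the invariant function \(\varphi\) into an invariant of the classification by building non-local families that meet \(\mbM\) in a single, topologically recognizable point. First I would fix a point \(v_*\in\mbM\) and a local \(k\)-parameter family (\(k=\codim\mbM\)) transverse to \(\mbM\) at \(v_*\); such a family exists since \(\mbM\) has finite codimension. Using a local defining submersion \(f\colon\mcU'\to\bbR^{k}\) with \(\mbM\cap\mcU'=f^{-1}(0)\), I would extend it to a non-local family \(V_*\) over a small ball \(\mcB\subset\bbR^{k}\) so that \(V_*(\overline{\mcB})\subset\mcU\), the intersection \(V_*(\overline{\mcB})\cap\mbM=\{v_*\}\) is a single interior point, and that point is a nondegenerate zero of \(\alpha\mapsto f(V_*(\alpha))\). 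I would then let \(\mbM^{\pitchfork}_{nonloc.}\) be a sufficiently small neighbourhood of \(V_*\) among the non-local families \(V\) over \(\mcB\) satisfying the two open conditions \(V(\overline{\mcB})\subset\mcU\) and \(V(\partial\mcB)\cap\mbM=\emptyset\).

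Next I would verify that this set is open and non-empty and that every \(V\) in it meets \(\mbM\) in exactly one point \(\alpha_0=\alpha_0(V)\), depending continuously on \(V\). Persistence of the single transverse zero follows from the implicit function theorem applied to \(f\circ V\) near \(v_*\); the fact that \(V_*(\overline{\mcB})\) stays at positive distance from \(\mbM\) away from \(v_*\) rules out new interior crossings for nearby \(V\), and compactness of \(\partial\mcB\) keeps the boundary clear of \(\mbM\). On this set I define the candidate invariant \(I(V):=\varphi(v_{\alpha_0(V)})\), which is well defined because \(\alpha_0\) is unique and \(v_{\alpha_0}\in\mbM\).

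The core of the argument is that \(I\) is constant on equivalence classes. Suppose \(V,\tilde V\in\mbM^{\pitchfork}_{nonloc.}\) are weakly topologically equivalent with \(Sep\)-tracking, witnessed by a parameter homeomorphism \(h\) with \(v_\alpha\) orbitally equivalent with \(Sep\)-tracking to \(\tilde v_{h(\alpha)}\). At the crossing \(\alpha_0\) of \(V\) we have \(v_{\alpha_0}\in\mbM\), and \(v_{\alpha_0}\) is equivalent to \(\tilde v_{h(\alpha_0)}=\tilde V(h(\alpha_0))\in\mcU\); since \(\mbM\) is topologically distinguished in \(\mcU\), a field of \(\mbM\) cannot be equivalent to a field of \(\mcU\setminus\mbM\), whence \(\tilde v_{h(\alpha_0)}\in\mbM\). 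As \(\tilde V\) meets \(\mbM\) only at its own crossing \(\tilde\alpha_0\), this forces \(h(\alpha_0)=\tilde\alpha_0\). Recentering both parameter spaces at these points, the germ of \(V\) at \(\alpha_0\) and the germ of \(\tilde V\) at \(\tilde\alpha_0\) are local families in \(\mbM^{\pitchfork,\codim\mbM}\), and the germ of \(h\) exhibits them as weakly equivalent with \(Sep\)-tracking; by \autoref{def:inv-func} this gives \(\varphi(v_{\alpha_0})=\varphi(\tilde v_{\tilde\alpha_0})\), i.e.\ \(I(V)=I(\tilde V)\). Thus \(I\) is a numerical invariant of the classification, and if \(\varphi\) is non-constant, picking two points of \(\mbM\) with distinct \(\varphi\)-values as the base of the construction yields families in \(\mbM^{\pitchfork}_{nonloc.}\) with different values of \(I\), hence genuinely inequivalent.

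I expect the main obstacle to be the openness-and-stability step of the second paragraph: one must guarantee that the single transverse crossing persists under perturbation while simultaneously keeping the whole image inside \(\mcU\) and away from \(\mbM\) everywhere else, so that the identification \(h(\alpha_0)=\tilde\alpha_0\) through topological distinction is forced and unambiguous; everything else is a routine consequence of transversality and the definitions.
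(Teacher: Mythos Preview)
Your proposal is correct and follows essentially the same approach as the paper: define the set of non-local families that stay in \(\mcU\) and meet \(\mbM\) transversally at a single point, then use the topologically distinguished property to force \(h(\alpha_0)=\tilde\alpha_0\), and apply the definition of an invariant function to the induced local germs. The paper states the openness and non-emptiness of \(\mbM^{\pitchfork}_{nonloc.}\) without justification, whereas you supply the construction of \(V_*\) and the implicit-function-theorem argument; this extra care is fine but not a genuinely different route.
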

\begin{proof}
    First, we define the subset \(\mbM^{\pitchfork}_{nonloc.}\).
    We say that \(V\in\mbM^{\pitchfork}_{nonloc.}\), if
    \begin{itemize}
      \item \(v_{\alpha}\in\mcU\) for all \(\alpha\in\mcB\);
      \item \(V\) meets~\(\mbM\) at a single vector field~\(v\);
      \item \(V\) is transverse to~\(\mbM\) at~\(v\).
    \end{itemize}

    Clearly, \(\mbM^{\pitchfork}_{nonloc.}\) is an open non-empty set.
    Since \(\mbM\) is topologically distinguished, the map \(\mbM^{\pitchfork}_{nonloc.}\to\mbM^{\pitchfork,k}\) given by \(V\mapsto(V, V\cap\mbM)\) sends equivalent non-local families to equivalent local families.
    Therefore, the formula \(V\mapsto\varphi(V\cap\mbM)\) defines a numerical invariant of classification of non-local families~\(V\in\mbM^{\pitchfork}_{nonloc.}\).
\end{proof}

One can apply this theorem to the assertions of Theorems~\ref{thm:pc-n-m} and~\ref{thm:LEG-num} in this paper, and turn them into results on non-local families.
\subsection{Functional invariants}%
\label{sub:func-invs}

In this section we explain why presence of more than one independent robustly invariant function \(\varphi_{j}\colon\mbM\to\bbR\) provides us with functional invariants of classification.
For a more detailed explanation see~\cite[Sec. 3.4]{IKS-th1}.
Informally, in families with sufficiently many parameters, we can express some invariants as functions of other invariants, and these expressions are functional invariants.

\begin{definition}
    We say that invariant functions \(\varphi_{j}\colon\mbM\to\bbR \), \(j=1,\dotsc,D\), are \emph{independent}, if the differential of the map  \(\Phi\colon v\mapsto (\varphi _{1}(v),\dotsc ,\varphi_{D}(v))\) has the full rank at every point \(v\in\mbM\).
\end{definition}

Consider a topologically distinguished submanifold \(\mbM\subset\widehat\Vect\) with $D$ independent robustly invariant functions; put  \(\Phi(v) = (\varphi_1(v),\dotsc, \varphi_D(v))\).
Consider a family with \emph{more} parameters than $\codim \mbM$; namely take \(0<d<D\), \(k:=d+\codim \mbM\), and consider a family \(V\in \mbM^{\pitchfork ,k}\).
Note that it intersects $\mbM$ on a $d$-parameter subfamily.
In the parameter space, this intersection defines the germ
\[
    (S_{V}, 0):=V^{-1}(\mbM)=\set{\alpha \in (\bbR^{k},0)|v_{\alpha }\in \mbM}
\]
of a \(d\)-dimensional submanifold \((S_{V}, 0)\subset (\bbR^k, 0)\).
Consider the germ
\begin{align}
  \label{eq:f_V}
  f_V\colon (S_{V}, 0)&\to (\bbR^{D}, \Phi(v_{0})) &f_V(\alpha )&=\Phi(v_{\alpha})=(\varphi _{1}(v_{\alpha }),\dotsc ,\varphi _{D}(v_{\alpha })).
\end{align}

We claim that this germ considered up to a continuous change of variable in the domain is an invariant of the classification of local families \(V\in\mbM^{\pitchfork, k}\).
Indeed, consider another local family \(\tilde V\in\mbM^{\pitchfork, k}\) equivalent to \(V\).
Since \(\mbM\) is topologically distinguished, we have \(h(S_{V})=S_{\tilde V}\).
Note that for every \(\alpha\in S_{V}\) the germ of \(V\) at \(\alpha\) is equivalent to the germ of \(\tilde V\) at \(h(\alpha)\), hence
\begin{equation}
    \label{eq:f_V-eq}
    f_{V}(\alpha)=\Phi(v_{\alpha})=\Phi(\tilde{v}_{h(\alpha)})=f_{\tilde{V}}(h(\alpha)), \qquad \alpha\in(S_{V}, 0).
\end{equation}
Thus \(f_{\tilde{V}}\) differs from \(f_{V}\) by the continuous change of coordinate in its domain given by \(\chi_{V, \tilde V}:=\left.h\right|_{S_{V}}\colon (S_{V}, 0)\to(S_{\tilde V}, 0)\).
\begin{definition}%
    \label{def:func-inv-rank}
    We say that classification of families \(V\in\mbM^{\pitchfork, k}\) admits \emph{a functional invariant of rank \((d, D)\)}, if
    \begin{itemize}
      \item for every family \(V\in\mbM^{\pitchfork, k}\) there is a germ of a map \(f_{V}\colon (S_{V}, 0)\to (\bbR^{D}, f_{V}(0))\) from a \(d\)-dimensional submanifold \((S_{V}, 0)\subset(\bbR^{k}, 0)\) to \(\bbR^{D}\);
      \item for equivalent families \(V, \tilde V\) there exists a local homeomorphism \(\chi_{V, \tilde V}\colon(S_{V}, 0)\to(S_{\tilde V}, 0)\) such that \(f_{\tilde V}(\chi(\alpha))=f_{V}(\alpha)\) for all \(\alpha\in(S_{V}, 0)\);
      \item there is a non-empty open subset \(U\subset\bbR^{D}\) such that all equivalence classes of germs \(f\colon(S, 0)\to(\bbR^{D}, a)\) with \(a\in U\) can be realized as \(f_{V}\) for some \(V\in\mbM^{\pitchfork, k}\).
    \end{itemize}
\end{definition}

In terms of this definition, we have the following Proposition.
\begin{proposition}
    [cf. {\cite[Proposition 4]{IKS-th1}}]%
    \label{prop:func-invariants}
    Suppose that a topologically distinguished submanifold \(\mbM\subset\widehat\Vect\) has \(D>1\) independent robustly invariant functions \(\varphi _{1},\dotsc ,\varphi_{D}\).
    Take \(0<d<D\), \(k:=d+\codim \mbM\).
    Then the classification of families \(\mbM^{\pitchfork, k}\) admits a functional invariant of rank \((d, D)\).
\end{proposition}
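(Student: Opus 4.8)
The plan is to notice that the discussion preceding the statement has already done most of the work, so that the proof reduces to one genuinely new point. Indeed, for every \(V\in\mbM^{\pitchfork,k}\) the transversality of \(V\) to \(\mbM\) together with the dimension count \(k-\codim\mbM=d\) shows that \(S_{V}=V^{-1}(\mbM)\) is a germ of a \(d\)-dimensional submanifold; the germ \(f_{V}=\Phi\circ V|_{S_{V}}\) is well defined by~\eqref{eq:f_V}; and~\eqref{eq:f_V-eq} exhibits the reparametrization \(\chi_{V,\tilde V}=h|_{S_{V}}\) with \(f_{\tilde V}\circ\chi_{V,\tilde V}=f_{V}\). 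Thus the first two bullets of \autoref{def:func-inv-rank} already hold, and the only substantial task is the realization statement in the third bullet.

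For the realization I would argue as follows. By the definition of independence, \(d\Phi\) has full rank \(D\) at every point of \(\mbM\), so \(\Phi\colon\mbM\to\bbR^{D}\) is a submersion of Banach manifolds. Fix any \(v_{*}\in\mbM\), put \(a_{*}=\Phi(v_{*})\), and let \(U\) be a small ball around \(a_{*}\); by the implicit function theorem, after shrinking \(U\) there is a smooth local section \(\sigma\colon U\to\mbM\) with \(\Phi\circ\sigma=\mathrm{id}\). Given a germ \(f\colon(\bbR^{d},0)\to(\bbR^{D},a)\) with \(a\in U\), set \(g=\sigma\circ f\colon(\bbR^{d},0)\to\mbM\), so that \(\Phi\circ g=f\) and \(g(0)\in\mbM\). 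Now complete \(g\) to a transverse \(k\)-parameter family: choosing \(e_{1},\dotsc,e_{c}\in\widehat\Vect\), \(c:=\codim\mbM\), whose classes span a complement of \(T_{v_{*}}\mbM\), set \(V(\gamma,\beta)=g(\gamma)+\sum_{i=1}^{c}\beta_{i}e_{i}\) for \((\gamma,\beta)\in(\bbR^{d}\times\bbR^{c},0)=(\bbR^{k},0)\). The directions \(e_{i}\) are transverse to \(\mbM\), so \(V\) is transverse to \(\mbM\) at \(v_{0}=g(0)\); since \(g(\gamma)\in\mbM\) we get \(\{\beta=0\}\subset S_{V}\), and the dimension count forces \(S_{V}=\{\beta=0\}\) locally. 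Hence \(f_{V}(\gamma)=\Phi(g(\gamma))=f(\gamma)\), i.e.\ \(f_{V}=f\), which realizes the equivalence class of \(f\).

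The hard part will be regularity. A family in \(\widehat\mcV_{k}\) must have \(v_{\alpha}(x)\) jointly \(C^{3}\) in \((\alpha,x)\), whereas a prescribed germ \(f\) (and hence \(g=\sigma\circ f\)) is a priori only continuous, so the naive \(V\) above need not lie in \(\widehat\mcV_{k}\). I would resolve this by exploiting that the invariant is a germ considered only up to a continuous change of variable in the domain: for each equivalence class it suffices to produce one \(C^{3}\) family whose \(f_{V}\) lies in that class, so I would first replace \(f\) by a smooth representative of its reparametrization class (or realize smooth germs directly and note these exhaust the classes of interest) before lifting and extending. The remaining checks are routine: verifying that this smoothing stays inside the equivalence class, that the perturbation \(\sum\beta_{i}e_{i}\) can be taken small enough that the marked hyperbolic saddles and separatrices persist and vary continuously (so that \(V\in\widehat\mcV_{k}\) really has the required marking), and that \(U\) may be shrunk once so that a single section \(\sigma\) serves all base points \(a\in U\) simultaneously.
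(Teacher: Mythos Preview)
Your core argument is correct and is exactly what the paper does, only spelled out: the paper's entire proof is the single sentence that \(f_{V}\) is an invariant by~\eqref{eq:f_V-eq} and that realizability ``with \(U=\Phi(\mbM)\) easily follows from the independence of \(\varphi_{j}\).'' Your local section \(\sigma\) of the submersion \(\Phi\) and the extension \(V(\gamma,\beta)=g(\gamma)+\sum\beta_{i}e_{i}\) are precisely the content hidden in that ``easily follows''; the only cosmetic difference is that the paper takes \(U=\Phi(\mbM)\) globally, whereas you work over a small ball, which is all \autoref{def:func-inv-rank} requires.

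One caution about your regularity paragraph: the proposed fix---replace an arbitrary germ \(f\) by a smooth representative of its continuous-reparametrization class---does not work in general. For \(d=1\), \(D=2\), the image \(f([0,\epsilon))\) can be a nowhere-rectifiable arc, and no homeomorphism of the domain will make such a map \(C^{1}\). The paper does not address this either; the honest reading of the third bullet in \autoref{def:func-inv-rank} is that it quantifies over smooth germs (which is all \(f_{V}\) ever produces), and with that reading your section-and-extend construction goes through without any smoothing step.
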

Indeed, \(f_{V}\) given~by~\eqref{eq:f_V} is an invariant of classification due~to~\eqref{eq:f_V-eq}, and the realizability assertion with \(U=\Phi(\mbM)\) easily follows from the independence of \(\varphi_{j}\).

\begin{remark}
    The quotient space\footnote{Elements of this quotient space are called \emph{simple diagrams of rank \((d, D)\)} in~\cite{IKS-th1}.} of the germs \(f_{V}\) by the equivalence relation introduced in~\autoref{def:func-inv-rank} is at least as large as the space of germs of maps \(g\colon(\bbR^{d}, a)\to (\bbR^{D-d}, b)\), \((a, b)\in \Phi(\mbM)\).
    Indeed, for a generic local family~\(V\), the image \(f_{V}(S_{V})\) is a germ of a smooth \(d\)-dimensional submanifold in \(\bbR^{D}\), and this submanifold is the graph of a germ \(g_{V}\colon(\bbR^{d}, a)\to (\bbR^{D-d}, b)\), \((a, b)=\Phi(v_{0})\).
    Clearly, any germ of this type can be realized as \(g_{V}\) for some \(V\).
\end{remark}

We conclude that the existence of at least two independent robust invariants implies existence of functional invariants.

\subsection{Ears and glasses}%
\label{sub:ears-and-glasses}
Consider a~vector field~\(v\in \Vect\).
Recall that the \emph{characteristic number} of a hyperbolic saddle~\(L\) of a vector field~\(v\) is the absolute value of the ratio of the eigenvalues of \(dv(L)\), the negative one is in the numerator.

A~\emph{right lense} of~\(v\) is a tuple of
\begin{itemize}
  \item a~hyperbolic saddle point~\(R\) with the characteristic number \(\rho < 1\);
  \item a~separatrix loop~\(r\) of~\(R\);
  \item a~hyperbolic saddle point~\(I\) and its separatrix~\(\gamma \) that winds onto~\(r\) in the reverse time: \(\alpha (\gamma )=r\cup \set{R}\);
  \item a hyperbolic attractor (either a sink, or an attracting cycle) that attracts the unstable separatrix of~\(R\) that is not a part of~\(r\).
\end{itemize}
The yet unused stable separatrix~\(s\) of~\(R\) is called the \emph{incoming separatrix} of this right lense.
Denote by \(\mcR=(R, r, I, \gamma , s)\) the right lense described above.
The notation omits the hyperbolic attractor that attracts a separatrix of~\(R\), because we will never use it.

The loop~\(r\) splits the sphere into two discs; the one that includes the other two separatrices of~\(R\) is called the \emph{exterior} of~\(r\), and the other one is called the \emph{interior} of~\(r\).
It is easy to see that \(I\) and~\(\gamma \) are located inside~\(r\).

A \emph{left lense} \(\mcL=(L, l, I, \gamma , u)\) of~\(v\) is a right lense of~\(-v\).
Denote by \(\lambda >1\) the characteristic number of~\(L\).

Consider a vector field \(v\) with a left lense~\(\mcL=(L, l, I_{L}, \gamma_{L}, u)\) and a right lense~\(\mcR=(R, r, I_{R}, \gamma_{R}, s)\).
If \(u\) forms a separatrix connection~\(b\) (from \enquote{bridge}) with~\(s\), then we say that \(v\) has a separatrix graph \enquote{ears} or \enquote{glasses} depending on the orientations of \(l\) and \(r\), see \autoref{fig:ears-glasses}.
Let \(\mbE\sqcup\mbG\subset\widehat\Vect\) be the set of vector fields with \enquote{ears} or \enquote{glasses} with marked saddles \(L\), \(R\), \(I_{L}\), and \(I_{R}\), and their separatrices.
Define \(\varphi\colon\mbE\sqcup\mbG\to\bbR\) by
\begin{equation}
    \label{eq:eg:phi}
    \varphi(v)=-\frac{\ln\lambda}{\ln\rho}.
\end{equation}

We will use the following theorem.
\begin{theorem}
    [{\cite[Theorem 5]{GKS-glasses}}]%
    \label{thm:ears-glasses}
    The function \(\varphi\) given by~\eqref{eq:eg:phi} is a robustly invariant function on \(\mbE\sqcup\mbG\).
\end{theorem}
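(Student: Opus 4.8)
The plan is to verify robust invariance straight from \autoref{def:inv-func}: take two families \(V,\tilde V\in(\mbE\sqcup\mbG)^{\pitchfork,k}\) with \(k\ge\codim(\mbE\sqcup\mbG)=3\) (two homoclinic loops \(l\), \(r\) and one heteroclinic bridge \(b\)) that are weakly equivalent with \(Sep\)-tracking via a parameter homeomorphism \(h\), and show \(\varphi(v_0)=\varphi(\tilde v_0)\), i.e. \(-\ln\lambda/\ln\rho=-\ln\tilde\lambda/\ln\tilde\rho\). The eigenvalue ratios \(\lambda,\rho\) at the marked saddles \(L,R\) are not themselves topological, so the whole point is to exhibit a genuinely topological feature of the bifurcation whose value is forced to equal \(\varphi\). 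I would read this feature off the two auxiliary winding separatrices \(\gamma_L\) (of \(I_L\), winding onto \(l\)) and \(\gamma_R\) (of \(I_R\), winding onto \(r\)): since \(L,R,I_L,I_R\) and all these separatrices are marked, any \(Sep\)-tracking equivalence sends \(\gamma_L\mapsto\tilde\gamma_L\) and \(\gamma_R\mapsto\tilde\gamma_R\) and can never confuse one with the other.

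First I would restrict \(V\) to a generic smooth ray \(\ell\colon\{t\ge 0\}\to(\bbR^k,0)\) transverse to \(\mbE\sqcup\mbG\) at the origin, so that for \(t>0\) both loops and the bridge are broken. As \(t\to 0\) each winding separatrix makes more and more turns near the loop it is attached to before escaping, and it realises a \emph{separatrix connection} — landing exactly on a marked saddle or marked separatrix, hence forming a topologically distinguished polycycle — at a discrete sequence of parameter values \(t=b_1>b_2>\dots\to 0\) for \(\gamma_R\) and \(t=a_1>a_2>\dots\to 0\) for \(\gamma_L\). The quantitative input is the asymptotics of the correspondence (Dulac) map near a hyperbolic saddle loop, whose iteration near \(r\) (resp.\ \(l\)) is governed by the characteristic number \(\rho\) (resp.\ \(\lambda\)), together with the Andronov--Leontovich analysis of the opening loops. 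In a suitable iterated-logarithmic coordinate these connection sequences become arithmetic progressions with steps proportional to \(\ln(1/\rho)\) and to \(\ln\lambda\); what matters topologically is not either rate separately but the asymptotic interleaving frequency of \(\{a_n\}\) and \(\{b_n\}\), which is exactly the ratio \(-\ln\lambda/\ln\rho=\varphi(v_0)\).

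The equivalence then does the remaining work. Because \(h\) is an order-preserving homeomorphism of the ray parameter, the connection events are topologically distinguished, and \(Sep\)-tracking pins the label of each event to a specific marked separatrix, \(h\) carries \(\{b_n\}\) onto the connection sequence \(\{\tilde b_n\}\) of \(\tilde\gamma_R\) and \(\{a_n\}\) onto \(\{\tilde a_n\}\) of \(\tilde\gamma_L\), preserving the order in which all these values occur along \(\ell\). Hence the interleaving pattern of the merged ordered set \(\{a_n\}\cup\{b_n\}\), a purely order-theoretic object, is preserved; its asymptotic frequency is an order-invariant and therefore equals that of the merged set for \(\tilde V\). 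Reading off the ratio of steps from this invariant gives \(-\ln\lambda/\ln\rho=-\ln\tilde\lambda/\ln\tilde\rho\), that is \(\varphi(v_0)=\varphi(\tilde v_0)\). Nothing in this argument uses \(k=3\) beyond transversality to \(\mbE\sqcup\mbG\) and the resulting breaking of the two loops along \(\ell\), so it is uniform in \(k\ge 3\) and yields robust, not merely plain, invariance.

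The main obstacle is precisely this conversion of an analytic modulus into a topological one, and it splits into two technical points. First, one needs sharp, perturbation-uniform asymptotics of the loop correspondence maps, so that \(a_n\) and \(b_n\) really are arithmetic-plus-bounded in the iterated-logarithmic coordinate with the asserted steps and the interleaving has a well-defined asymptotic frequency; these estimates are of the normal-form/Dulac type developed in \cite{GKS-glasses,IKS-th1}. Second, one must check that the landing of a marked separatrix on a marked saddle is genuinely visible to weak equivalence with \(Sep\)-tracking — so that \(h\) can neither erase such an event nor reshuffle it between \(\gamma_L\) and \(\gamma_R\) — and that the quantity recovered from the interleaving is \(-\ln\lambda/\ln\rho\) and not some other symmetric function of \((\lambda,\rho)\). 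It is exactly here that the marking hypothesis is indispensable, and getting both points right is the heart of the proof.
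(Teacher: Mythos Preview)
The paper does not prove this theorem; it cites \cite{GKS-glasses} and offers only a one-line summary (``on a certain distinguished one-parameter subfamily in the unfolding, the order in which separatrix connections occur is governed by $-\ln\lambda/\ln\rho$''), plus the later remark that the argument computes the \emph{relative density} of two sequences of parameter values. Your sketch follows exactly this idea, so the strategy is right.

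There is, however, a genuine gap in how you set up the one-parameter subfamily. You pick a \emph{generic} ray $\ell$ in $(\bbR^k,0)$ and then assert that ``$h$ is an order-preserving homeomorphism of the ray parameter'' sending your connection sequences $\{a_n\},\{b_n\}$ to the corresponding sequences $\{\tilde a_n\},\{\tilde b_n\}$ for $\tilde V$. But $h$ is only a homeomorphism of $(\bbR^k,0)$; the image $h(\ell)$ is some curve, not the ray $\tilde\ell$ you would independently choose for $\tilde V$, so there is no reason $h(a_n)=\tilde a_n$. What you actually get is that $h$ sends the codimension-one \emph{hypersurfaces} of sparkling connections of $V$ to those of $\tilde V$, and hence preserves the interleaving of $\{a_n\}\cup\{b_n\}$ along $\ell$ as compared with the interleaving of their $h$-images along $h(\ell)$. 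To finish you must either (i) prove that the asymptotic interleaving frequency is the \emph{same} along every curve through $0$ transverse to these hypersurfaces (so that computing it along $h(\ell)$ still returns $\varphi(\tilde v_0)$), or (ii) replace the generic ray by a \emph{topologically distinguished} one-parameter subfamily that $Sep$-tracking forces $h$ to carry to the corresponding subfamily of $\tilde V$. The paper's wording ``a certain distinguished one-parameter subfamily'' indicates that the proof in \cite{GKS-glasses} takes route (ii); compare the use of the distinguished curve $\mcE$ in the proof of \autoref{thm:pc:seq}. Without one of these two fixes, the passage from ``$h$ preserves the interleaving along $\ell$'' to ``$\varphi(v_0)=\varphi(\tilde v_0)$'' is unjustified.
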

The proof in~\cite{GKS-glasses} relies on the fact that on a certain distinguished one-parameter subfamily in the unfolding, the order in which separatrix connections occur is governed by $-\ln  \lambda/\ln \rho$.
\subsection{Invariants and limit points}
Most examples in this paper follow the same pattern.
We take a degenerate vector field~\(v_{0}\) such that while \(v_{0}\) itself has no \enquote{ears} or \enquote{glasses}, they appear in arbitrarily small perturbations of \(v_{0}\).
Then we claim that the ratio of logarithms of characteristic numbers of the saddles that will participate in these \enquote{ears} or \enquote{glasses} is a robustly invariant function.
\begin{theorem}%
    \label{thm:eg-closure}
    Let \(\mbM\subset\widehat\Vect\) be a Banach submanifold of finite codimension.
    Suppose that for every \(V=\set{v_{\alpha}}\in\mbM^{\pitchfork, k}\), \(k\ge\codim\mbM\), we have four families of marked saddles \(L(\alpha)\), \(R(\alpha)\), \(I_{L}(\alpha)\), \(I_{R}(\alpha)\) such that for some sequence of parameter values \(\alpha_{n}\to 0\) the vector field \(v_{\alpha_{n}}\) has \enquote{ears} or \enquote{glasses} with \(L(\alpha_{n})\), \(R(\alpha_{n})\), \(I_{L}(\alpha_{n})\), \(I_{R}(\alpha_{n})\) playing the same roles as in \autoref{thm:ears-glasses}.
    Then \(\varphi\colon\mbM\to\bbR\), \(\varphi(v)=\frac{\ln\lambda(v)}{\ln\rho(v)}\), where \(\lambda\) and \(\rho\) are characteristic numbers of \(L\) and \(R\), is a robustly invariant function.
\end{theorem}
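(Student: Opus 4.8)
The plan is to transfer the robust invariance of \(\varphi\) on \(\mbE\sqcup\mbG\) (\autoref{thm:ears-glasses}) to \(\mbM\) by a limiting argument along the given sequence \(\alpha_n\to 0\), using continuity of the characteristic numbers. First I would record that \(\varphi\) is continuous on \(\mbM\): the marked saddles \(L(\alpha)\), \(R(\alpha)\) stay hyperbolic, so their eigenvalues, and hence the characteristic numbers \(\lambda(v_\alpha)\), \(\rho(v_\alpha)\), depend continuously on \(\alpha\); since \(\rho(v_0)\neq 1\), the quotient \(\varphi(v_\alpha)=\ln\lambda(v_\alpha)/\ln\rho(v_\alpha)\) is continuous near \(\alpha=0\). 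Consequently \(\varphi(v_{\alpha_n})\to\varphi(v_0)\). If \(V,\tilde V\in\mbM^{\pitchfork,k}\) are equivalent through a germ of homeomorphism \(h\), then \(h(\alpha_n)\to h(0)=0\), so likewise \(\varphi(\tilde v_{h(\alpha_n)})\to\varphi(\tilde v_0)\). It therefore suffices to show that \(\varphi(v_{\alpha_n})=\varphi(\tilde v_{h(\alpha_n)})\) for infinitely many \(n\) and then pass to the limit.

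Next I would show that equivalence carries the ears/glasses structure from \(v_{\alpha_n}\) to \(\tilde v_{h(\alpha_n)}\). Fix \(n\). By \autoref{def:tracking} there is an orientation-preserving homeomorphism \(H_n\) conjugating \(v_{\alpha_n}\) and \(\tilde v_{h(\alpha_n)}\) that sends marked saddles and separatrices to their counterparts. Since \(H_n\) maps trajectories to trajectories, preserves time orientation, and preserves the topological stability of the separatrix loops, it carries the loops \(l,r\), the bridge \(b\), the windings of \(\gamma_L,\gamma_R\), and the two attractors to the corresponding objects of \(\tilde v_{h(\alpha_n)}\). In particular the inequalities \(\lambda>1>\rho\) survive, as they merely record which way the loops are stable, a topological feature. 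Hence \(\tilde v_{h(\alpha_n)}\in\mbE\sqcup\mbG\) with the marked saddles in the corresponding roles. Recentering \(V\) at \(\alpha_n\) and \(\tilde V\) at \(h(\alpha_n)\) now produces local families \(V_n,\tilde V_n\) with centers in \(\mbE\sqcup\mbG\), equivalent via the germ of \(h\) at \(\alpha_n\).

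If \(V_n\) and \(\tilde V_n\) meet \(\mbE\sqcup\mbG\) transversally at their centers, then \(V_n,\tilde V_n\in(\mbE\sqcup\mbG)^{\pitchfork,k}\) (with \(k\ge\codim(\mbE\sqcup\mbG)\)), and the robust invariance of \(\varphi\) on \(\mbE\sqcup\mbG\) supplied by \autoref{thm:ears-glasses} gives \(\varphi(v_{\alpha_n})=\varphi(\tilde v_{h(\alpha_n)})\). Combined with the first paragraph, letting \(n\to\infty\) yields \(\varphi(v_0)=\varphi(\tilde v_0)\), which is exactly robust invariance of \(\varphi\) on \(\mbM\) in the sense of \autoref{def:inv-func}.

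The main obstacle is precisely the transversality needed in the last step. Because \(v_0\) lies on the frontier of \(\mbE\sqcup\mbG\) (indeed \(v_0\notin\mbE\sqcup\mbG\), the ears and glasses appearing only after perturbation), the set \(\mbE\sqcup\mbG\) is not closed near \(v_0\) and its local defining submersions degenerate as \(\alpha_n\to 0\), so a given family need not cross \(\mbE\sqcup\mbG\) transversally at every intersection. I would resolve this by refining the sequence to parameters \(\alpha_n\) at which \(V\) crosses \(\mbE\sqcup\mbG\) transversally and, simultaneously, \(\tilde V\) crosses transversally at \(h(\alpha_n)\); note that \(h\) does map the intersection locus of \(V\) onto that of \(\tilde V\), since membership in \(\mbE\sqcup\mbG\) is preserved by the conjugacy, but it need not preserve transversality, so the two conditions must be imposed separately. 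Exhibiting such a common sequence accumulating at \(0\) is the delicate point and is where the concrete structure of \(\mbM\) in the intended applications must be invoked. Alternatively, one can bypass the black box and reprove the conclusion by opening up \autoref{thm:ears-glasses}, reading \(\varphi(v_0)\) directly off the limiting order of separatrix connections along a distinguished path through the \(\alpha_n\).
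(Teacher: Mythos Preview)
Your approach coincides with the paper's: carry the ears/glasses structure from $v_{\alpha_n}$ to $\tilde v_{h(\alpha_n)}$ via the $Sep$-tracking homeomorphism $H_n$, recenter both families at $\alpha_n$ and $h(\alpha_n)$, invoke \autoref{thm:ears-glasses} to obtain $\varphi(v_{\alpha_n})=\varphi(\tilde v_{h(\alpha_n)})$, and pass to the limit using continuity of the characteristic numbers of the marked hyperbolic saddles. The paper does exactly this, in one short paragraph.

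The transversality obstacle you flag in your last paragraph is not addressed in the paper's proof. The paper simply writes that ``the unfoldings $(V,\alpha_n)$ and $(\tilde V,\tilde\alpha_n)$ are equivalent as well. Now \autoref{thm:ears-glasses} implies that $\frac{\ln\lambda_n}{\ln\rho_n}=\frac{\ln\tilde\lambda_n}{\ln\tilde\rho_n}$,'' and moves on---without checking that the recentered families lie in $(\mbE\sqcup\mbG)^{\pitchfork,k}$. So your proposed refinement of the sequence to transversal intersections, or the alternative of opening up the argument behind \autoref{thm:ears-glasses}, goes beyond what the paper does. Presumably the authors are reading \autoref{thm:ears-glasses} as applying to any two equivalent local families with centers in $\mbE\sqcup\mbG$ and $k\ge 3$, not just transverse ones; the underlying proof in~\cite{GKS-glasses} extracts the invariant from the order of sparkling separatrix connections along a one-parameter subfamily and does not obviously need the full transversality. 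In short: you have identified a point of rigor the paper glosses over, but otherwise your argument is the paper's argument.
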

Recall that the inequality $\lambda(v)>1>\rho(v)$ is required in the definitions of \enquote{glasses} and \enquote{ears}.
\begin{proof}
    Consider two equivalent families \(V, \tilde V\in\mbM^{\pitchfork, k}\).
    Let \((\alpha_{n})\) be the sequence in the parameter space of~\(V\) guaranteed by the assumptions of the theorem.
    For simplicity, put \(L_{n}=L(\alpha_{n})\) etc.
    Let \(H_{n}\colon S^{2}\to S^{2}\) be the homeomorphism of the sphere that implements the equivalence between \(v_{\alpha_{n}}\) and \(\tilde v_{h(\alpha_{n})}\).

    Since \(v_{\alpha_{n}}\) is equivalent to~\(\tilde v_{h(\alpha_{n})}\), the latter vector field has a separatrix graph \enquote{ears} or \enquote{glasses} with \(\tilde L_{n}=H_{n}(L_{n}), \tilde R_{n}=H_{n}(R_{n}), H_{n}(I_{L,n}), H_{n}(I_{R,n})\) playing the role of \(L\), \(R\), \(I_{L}\), \(I_{R}\) in \autoref{sub:ears-and-glasses}.
    Since \(V\) is equivalent to \(\tilde V\), the unfoldings \((V, \alpha_{n})\) and \((\tilde V, \tilde\alpha_{n})\) are equivalent as well.
    Now \autoref{thm:ears-glasses} implies that \(\frac{\ln\lambda_{n}}{\ln\rho_{n}}=\frac{\ln\tilde \lambda_{n}}{\ln\tilde \rho_{n}}\), where \(\lambda_{n}\), \(\rho_{n}\), \(\tilde \lambda_{n}\), \(\tilde\rho_{n}\) are the characteristic numbers of \(L_{n}\), \(R_{n}\), \(\tilde L_{n}\), and \(\tilde R_{n}\), respectively.
    Finally, taking the limit as \(n\to\infty\) completes the proof.
\end{proof}

\section{Adding a semi-stable cycle}%
\label{sec:semi-stable}

\subsection{Cheap numerical invariants}%
\label{sub:pc:many-lenses}
Consider a vector field \(v_0\in \Vect\).
Suppose that it has
\begin{itemize}
  \item a semi-stable cycle \(c\) of multiplicity~\(2\);
    we choose \(\infty \in S^2\) so that \(c\) attracts from exterior and repells from interior;
  \item an \(M\)-tuple~\(\mcL=(\mcL_1, \dotsc , \mcL_{M})\), \(M\ge 1\), of left glass lenses \(\mcL_{i}=(L_{i}, l_{i}, I_{L,i}, \gamma _{L,i}, u_{i})\) located outside~\(c\) such that \(u_{i}\) wind onto~\(c\): \(\omega (u_{i})=c\);
  \item an \(N\)-tuple~\(\mcR=(\mcR_1, \dotsc , \mcR_{N})\), \(N\ge 1\), of right glass lenses \(\mcR_{j}=(R_{j}, r_{j}, I_{R,j}, \gamma _{R,j}, s_{j})\) located inside~\(c\) such that \(s_{j}\) wind onto~\(c\) in backward time: \(\alpha (s_{j})=c\).
\end{itemize}

\begin{figure}
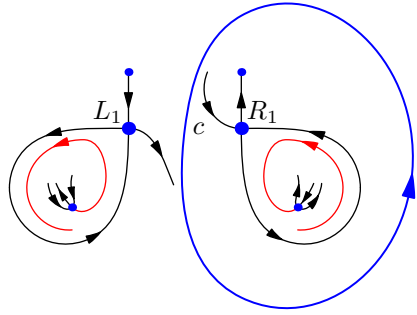

    \centering
    \asyinclude{glasses-whirl}
    \caption{A vector field of class \(\WG_{1,1}\)}\label{fig:whirled-glasses}
\end{figure}

Let \((\WG_{M,N})\subset \widehat{\Vect}\) be the Banach submanifold of vector fields as above with marked saddles $L_i, R_i, I_{L, i}, I_{R,i} $ and their separatrices, see \autoref{fig:whirled-glasses} for~\(M=N=1\).
This submanifold has codimension~\(M+N+1\).

Let $\lambda_j>1$, $\rho_j<1$ be the characteristic numbers of the saddles $L_j, R_j$ respectively.
\begin{theorem}%
    \label{thm:pc-n-m}
    For each $M\ge 1, N\ge 1$, the Banach submanifold $\WG_{M,N}$ of codimension $M+N+1$ has $M+N-1$ independent robustly invariant functions.

    Namely, for each $i\le M$, \(j\le N\), the ratio $\varphi_{i,j}(v)=\frac{\ln \lambda_i(v)}{\ln \rho_j(v)}$ is a robustly invariant function.
\end{theorem}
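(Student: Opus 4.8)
The plan is to derive robust invariance of each $\varphi_{i,j}$ from \autoref{thm:eg-closure}, and then to prove the independence count by a short linear-algebra argument on the logarithms of the characteristic numbers. Fix $i\le M$ and $j\le N$ and apply \autoref{thm:eg-closure} with $\mbM=\WG_{M,N}$ and with the four tracked families $L_i(\alpha),R_j(\alpha),I_{L,i}(\alpha),I_{R,j}(\alpha)$. The only nontrivial hypothesis is that every transverse family $V\in\WG_{M,N}^{\pitchfork,k}$, $k\ge M+N+1$, admits a sequence $\alpha_n\to 0$ at which $v_{\alpha_n}$ has a \enquote{glasses} built from the $i$-th left lens $\mcL_i$ and the $j$-th right lens $\mcR_j$. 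Transversality to $\WG_{M,N}$ means that the $M+N+1$ bifurcation functions — the splitting parameter $\delta$ of the semi-stable cycle $c$ and the loop discrepancies $\ell_1,\dots,\ell_M,p_1,\dots,p_N$ of the loops $l_i,r_j$ — form a submersion at $\alpha=0$. I would restrict to the codimension-$2$ submanifold $\Sigma=\set{\ell_i=0,\ p_j=0}$ of the parameter space, on which both loops $l_i$ and $r_j$ persist; since $d\delta,d\ell_i,dp_j$ are independent, $\delta|_{\Sigma}$ is still a submersion, so I can choose a curve $\alpha(t)\in\Sigma$ along which $\delta\to 0$ from the side on which $c$ is destroyed.

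The heart of the argument is the destruction of the semi-stable cycle. For $\delta$ on the destruction side, $c$ has disappeared and the flow crosses the former location of $c$ from exterior to interior through a narrow bottleneck; the incoming separatrix $u_i$ of $L_i$ (which wound onto $c$ from outside) passes through the bottleneck into the interior, while the backward continuation of the incoming separatrix $s_j$ of $R_j$ passes out of it. A bridge $b$, and hence a \enquote{glasses}, occurs exactly when the forward image of $u_i$ under the correspondence map through the bottleneck lands on $s_j$. The crucial quantitative fact is that this correspondence map has transition time, and hence accumulated rotation along the former $c$, of order $\abs{\delta}^{-1/2}\to\infty$ as $\delta\to 0$. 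Consequently the phase at which $u_i$ emerges into the interior sweeps around $c$ infinitely many times, and the bridge condition is met for a sequence $\delta_n\to 0$ by an intermediate-value argument; these give the required $\alpha_n\to 0$. Because the loops $l_i,r_j$ persist on $\Sigma$ and the auxiliary separatrices $\gamma_{L,i},\gamma_{R,j}$ wind correctly by an open condition inherited from $v_0$, each $v_{\alpha_n}$ genuinely has \enquote{glasses} in the sense of \autoref{sub:ears-and-glasses}, so \autoref{thm:eg-closure} yields that $\varphi_{i,j}$ is robustly invariant.

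For independence, write $x_i=\ln\lambda_i>0$ and $y_j=-\ln\rho_j>0$. Nothing in the definition of $\WG_{M,N}$ constrains the eigenvalue ratios of the marked saddles, so $v\mapsto(x_1,\dots,x_M,y_1,\dots,y_N)$ is a submersion onto an open subset of $\bbR_{>0}^{M+N}$ and the differentials $dx_i,dy_j$ are linearly independent at every point of $\WG_{M,N}$. Since $\varphi_{i,j}=-x_i/y_j$, one has $d\varphi_{i,j}=\varphi_{i,j}\,(d\ln x_i-d\ln y_j)$, a nonzero multiple of the \enquote{edge vector} $d\ln x_i-d\ln y_j$ associated with the pair $(i,j)$ of the complete bipartite graph $K_{M,N}$. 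The span of all such edge vectors equals the cut space of $K_{M,N}$, of dimension (number of vertices) $-$ (number of components) $=(M+N)-1$, and any spanning tree selects a basis — for instance the $M+N-1$ pairs $\set{(1,j):j\le N}\cup\set{(i,1):2\le i\le M}$. Thus these $M+N-1$ functions $\varphi_{i,j}$ are independent, and together with the first part they are independent robustly invariant functions, as claimed.

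The main obstacle is the analytic core of the second paragraph: proving that bridges, and hence \enquote{glasses}, recur along a sequence $\delta_n\to 0$. This rests on the asymptotics of the correspondence map through a destroyed semi-stable cycle, whose accumulated rotation diverges like $\abs{\delta}^{-1/2}$, and on checking that this winding argument is compatible with simultaneously holding both loop discrepancies $\ell_i$ and $p_j$ at zero. Everything after this input — reducing to $\Sigma$, invoking \autoref{thm:eg-closure}, and the bipartite-graph rank count — is routine.
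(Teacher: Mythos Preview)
Your proposal is correct and follows essentially the same route as the paper: fix $(i,j)$, restrict to parameters where the two relevant lenses $\mcL_i,\mcR_j$ survive, destroy the semi-stable cycle, and use the sparkling-connection phenomenon to produce a sequence $\alpha_n\to 0$ with a bridge between $u_i$ and $s_j$, then invoke \autoref{thm:eg-closure}. The paper's proof is terser—it simply asserts that such a sequence exists by transversality—whereas you spell out the restriction to $\Sigma=\{\ell_i=0,\ p_j=0\}$ and the $|\delta|^{-1/2}$ rotation mechanism; both are the same argument at different levels of detail. One small inaccuracy: the separatrix graph that forms at $\alpha_n$ may be \enquote{ears} rather than \enquote{glasses} depending on the relative orientation of the two loops across the former cycle, so you should say \enquote{ears or glasses} as the paper does; this does not affect the conclusion since \autoref{thm:eg-closure} covers both. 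Your independence argument via the bipartite incidence rank is a pleasant elaboration of the paper's one-line remark that the invariants amount to a point in $\bbR\bbP^{M+N-1}$.
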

Clearly, only $M+N-1$ of these ratios are independent; we could also say that the vector function $v\mapsto [\ln \lambda_1(v):\dots:\ln \lambda_M(v): \ln \rho_1(v): \dots: \ln \rho_N(v)]\in\bbR\bbP^{M+N-1}$ is a robustly invariant function.

\begin{proof}
    [Proof of \autoref{thm:pc-n-m}]
    Let us prove that \(\varphi_{i,j}\) is a robustly invariant function.
    Without loss of generality we may and will assume that \(i=j=1\).

    Let as apply \autoref{thm:eg-closure}.
    Namely, each $V\in \WG_{N,M}^{\pitchfork, k}$ has marked saddles $L_1, I_{L, 1}, R_1, I_{R, 1}$ and their local separatrices.
    Put $L_1(\alpha):=L_1(v_\alpha)$ etc.

    Consider a sequence of perturbations $v_{\alpha_k}$ of $v_0$ in $V$ with $\alpha_k\to 0$, such that:
    \begin{itemize}
      \item the glass lenses \(\mcL_1\) and \(\mcR_1\) survive;
      \item the parabolic cycle $c$ disappears;
      \item \(u_1(\alpha)\) makes many turns around the disappeared parabolic cycle and  coalesces with \(s_1(\alpha)\).
    \end{itemize}
    A sequence of this type exists because $V$ is transverse to $\WG_{N,M}$.

    Note that for  $v_{\alpha_k}$, the left lense $ \mcL_1(\alpha)$ and the right lense $\mcR_1(\alpha)$ form either the \enquote{glasses} or the \enquote{ears} separatrix graph.
    Finally, \autoref{thm:eg-closure} implies that $\varphi_{1,1}(v)=\frac{\ln \lambda_1(v)}{\ln \rho_1(v)}$ is a robustly invariant function.
\end{proof}

Due to \autoref{prop:func-invariants}, robustly invariant functions produce functional invariants in families when we add extra parameters.
Thus \autoref{thm:pc-n-m} and \autoref{prop:func-invariants} immediately imply the following corollary.
\begin{corollary}%
    \label{cor:semi-stable-functional}
    For any two natural numbers \(0<d<D\), there exists a submanifold \(\mbM_{d,D}\subset\widehat\Vect\) of codimension \(D+2\) such that the classification of families \(V\in\mbM_{d,D}^{\pitchfork, d+D+2}\) has a functional invariant of rank \((d, D)\).

    Namely, we can choose any \(M, N\ge 1\), \(M+N=D+1\), and take \(\mbM_{d,D}=\WG_{M,N}\).
    In particular, the classification of five-parameter families $V\in\WG_{1,2}^{\pitchfork, 5}$ has a functional invariant of rank \((1, 2)\).
\end{corollary}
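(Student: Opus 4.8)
The plan is to obtain \autoref{cor:semi-stable-functional} as a direct specialization of \autoref{prop:func-invariants} to the family of submanifolds \(\WG_{M,N}\) furnished by \autoref{thm:pc-n-m}; the only substantive work is to check that the hypotheses of \autoref{prop:func-invariants} are met for \(\mbM=\WG_{M,N}\) and to reconcile the numerical parameters.

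First I would do the combinatorial bookkeeping. Given \(0<d<D\) we have \(D\ge 2\), so \(D+1\ge 3\) and we may select integers \(M,N\ge 1\) with \(M+N=D+1\) (for example \(M=1\), \(N=D\)). By \autoref{thm:pc-n-m} the submanifold \(\WG_{M,N}\) then carries \(M+N-1=D\) independent robustly invariant functions \(\varphi_{i,j}\), and has \(\codim\WG_{M,N}=M+N+1=D+2\). Consequently \(k:=d+\codim\WG_{M,N}=d+D+2\), which is exactly the parameter count in the statement, and the inequality \(D>1\) demanded by \autoref{prop:func-invariants} holds because \(d\ge 1\) and \(d<D\).

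The step I expect to be the genuine obstacle is verifying that \(\WG_{M,N}\) is \emph{topologically distinguished} in the sense of \autoref{def:top-disting}, since this is an unstated but essential hypothesis of \autoref{prop:func-invariants}. I would argue it directly from the degeneracies baked into \(\WG_{M,N}\): a field \(v\in\WG_{M,N}\) carries \(M+N\) marked separatrix loops \(l_i\), \(r_j\) together with a semi-stable cycle \(c\) onto which the separatrices \(u_i\), \(s_j\) wind. Each separatrix loop is a homoclinic connection and is therefore preserved by any orbital topological equivalence with \(Sep\)-tracking, and the one-sided (semi-stable) attraction/repulsion pattern of \(c\) is likewise a topological feature. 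Hence any field \(w\) in a small neighbourhood of \(v\) that is orbitally topologically equivalent to \(v\) with \(Sep\)-tracking must reproduce all \(M+N\) loops and the cycle, so it satisfies every defining condition of \(\WG_{M,N}\) and lies in \(\WG_{M,N}\); this is precisely topological distinction. This is the same property tacitly used when the text notes that the non-local-families theorem applies to \autoref{thm:pc-n-m}.

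With these hypotheses secured, \autoref{prop:func-invariants} applied to \(\mbM=\WG_{M,N}\), with its \(D\) independent robustly invariant functions and the chosen \(d\), immediately produces a functional invariant of rank \((d,D)\) for the classification of families in \(\WG_{M,N}^{\pitchfork,\,d+D+2}\); this is the claimed \(\mbM_{d,D}\). The displayed special case is then the instance \(D=2\), \(d=1\), \(M=1\), \(N=2\): here \(\codim\WG_{1,2}=4\), the number of parameters is \(k=5\), and one obtains a functional invariant of rank \((1,2)\) on \(\WG_{1,2}^{\pitchfork,5}\).
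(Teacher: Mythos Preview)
Your proposal is correct and follows exactly the route the paper takes: the text states that the corollary follows immediately from \autoref{thm:pc-n-m} and \autoref{prop:func-invariants}, and your parameter bookkeeping matches. You go slightly beyond the paper by explicitly checking that \(\WG_{M,N}\) is topologically distinguished, a hypothesis the paper leaves implicit; your argument for this (the marked separatrix loops and the one-sided attracting/repelling cycle are topological features preserved under \(Sep\)-tracking equivalence, and closeness forces the cycle to have multiplicity exactly two) is sound.
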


Another construction that leads to functional invariants is described in~\cite[Sec. 3]{IKS-th1}.
Namely, it provides \((d+2D+1)\)-parameter families such that their classification up to the moderate equivalence admits functional invariants of rank \((d, D)\).
So for large \(D\) the invariants provided by \autoref{cor:semi-stable-functional} are almost twice \enquote{cheaper} than those constructed in~\cite{IKS-th1}.
In \autoref{sec:lips} we construct numerical and functional invariants in families with even fewer parameters.

\subsection{Infinitely many numerical invariants}%
\label{sub:semi-stable:3-param}
In the settings of the previous section, assume that~\(k=M+N+1\) and consider a family $V\in \WG_{M,N}^{\pitchfork, k}$.

Let \((\mcE, 0)\subset (\bbR^{k}, 0)\) be the curve in the parameter space of $V$ given by the condition \enquote{all the glass lenses survive}.
As in the general case, we will be interested in the points~\(\alpha \in \mcE\) such that the parabolic cycle is destroyed and \(u_1(\alpha)\) coalesces with~\(s_1(\alpha)\), forming the~bridge~\(b(\alpha)\).
In our case \(k=M+N+1\), we have \(\dim \mcE=1\), and these points form a~sequence \(\alpha _{n}\to 0\) enumerated by the number of turns \(n\) of the bridge~\(b\) around the disappeared cycle~\(c\).
For sufficiently large~\(n\), existence of~\(\alpha _{n}\) follows from simple continuity arguments, and uniqueness follows from, e.g.~\cite[Proposition 2]{Afr_Young}.
Put
\[
    \varphi_{n}(V)=\frac{\ln\lambda(v_{\alpha_{n}})}{\ln\rho(v_{\alpha_{n}})}.
\]

We say that the \emph{tail} of a sequence \((x_{n})\) is its equivalence class with respect to the following equivalence relation:
\((x_{n})\sim(\tilde x_{n})\) if for some \(a\in\bbZ\) and sufficiently large \(n\) we have \(\tilde x_{a+n}=x_{n}\).

\begin{theorem}%
    \label{thm:pc:seq}
    In the settings of \autoref{thm:pc-n-m}, consider the space \(\WG^{\pitchfork, k}_{M,N}\), \(k=M+N+1\).
    Then the tail of the sequence $(\varphi_n(V))$ defined above is an invariant of classification of families \(V\in\WG^{\pitchfork, k}_{M,N}\).
\end{theorem}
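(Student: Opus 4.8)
The plan is to fix two equivalent families \(V, \tilde V \in \WG_{M,N}^{\pitchfork, k}\), \(k = M+N+1\), together with a parameter homeomorphism \(h\colon(\bbR^{k},0)\to(\bbR^{k},0)\) realizing the weak equivalence with \(Sep\)-tracking, and to show that \(h\) identifies the two bridge-point sequences up to a constant index shift. Once this is done, equality of the ratios at corresponding points follows exactly as in \autoref{thm:eg-closure}. Concretely, I would reduce the statement to three claims: (a) \(h\) carries the germ \((\mcE,0)\) of \(V\) onto the germ \((\tilde{\mcE},0)\) of \(\tilde V\) and sends bridge points to bridge points; (b) the induced correspondence of bridge points is \(\alpha_{n}\mapsto\tilde\alpha_{n+a}\) for some fixed \(a\in\bbZ\) and all large \(n\); (c) for each such pair \(\varphi_{n}(V)=\varphi_{n+a}(\tilde V)\).

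For claim (a) the key point is that every relevant feature is topological and hence tracked by \(h\). The condition defining \(\mcE\) — that every glass lens survives — is the persistence of the separatrix loops \(l_{i}\), \(r_{j}\) of the \emph{marked} saddles \(L_{i}\), \(R_{j}\); since a \(Sep\)-tracking equivalence sends marked saddles and separatrices to their counterparts, \(v_{\alpha}\in\mcE\) forces \(\tilde v_{h(\alpha)}\in\tilde{\mcE}\), and symmetrically, so \(h(\mcE)=\tilde{\mcE}\) as germs. Transversality of \(V\) to \(\WG_{M,N}\) makes \(\mcE\) a smooth arc, and the two rays issuing from \(0\) are topologically distinguished by whether the semistable cycle has split into two hyperbolic cycles or has disappeared — a distinction preserved by orbital equivalence of the individual fields \(v_{\alpha}\), \(\tilde v_{h(\alpha)}\). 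Thus \(h\) maps the no-cycle ray to the no-cycle ray, and on that ray the bridge points are exactly the parameters where \(v_{\alpha}\) acquires an \enquote{ears} or \enquote{glasses} graph built from \(\mcL_{1}\) and \(\mcR_{1}\); this too is \(Sep\)-tracked, so \(h\) restricts to a bijection between the tails of \((\alpha_{n})\) and \((\tilde\alpha_{m})\).

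For claim (b) I would use that \(\mcE\) is one-dimensional and that \(h\) is only a germ. On the no-cycle ray \(\left.h\right|_{\mcE}\) is a germ of an orientation-preserving self-homeomorphism of a half-open arc fixing \(0\), hence strictly monotone. A strictly monotone bijection between the tail of one sequence decreasing to \(0\) and another such tail is an index shift by a constant, so \(h(\alpha_{n})=\tilde\alpha_{n+a}\) for all large \(n\), with \(a\) independent of \(n\). This is precisely why only the \emph{tail} of \((\varphi_{n})\), and not the whole sequence, can be invariant: the enumeration by the number of turns of the bridge is intrinsic to each family, whereas \(h\) preserves only the \emph{order} of the bridge points, losing the absolute count by the fixed amount \(a\).

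Claim (c) then follows the pattern of \autoref{thm:eg-closure}: for each large \(n\) the vector fields \(v_{\alpha_{n}}\) and \(\tilde v_{h(\alpha_{n})}=\tilde v_{\tilde\alpha_{n+a}}\) are orbitally equivalent with \(Sep\)-tracking, the marked saddles \(L_{1},R_{1}\) going to \(\tilde L_{1},\tilde R_{1}\), so \(\tilde\alpha_{n+a}\) is a bridge point of \(\tilde V\) built from \(\tilde{\mcL}_{1},\tilde{\mcR}_{1}\); applying \autoref{thm:ears-glasses} to the equivalent unfoldings \((V,\alpha_{n})\) and \((\tilde V,\tilde\alpha_{n+a})\) exactly as there gives \(\varphi_{n}(V)=\varphi_{n+a}(\tilde V)\). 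Hence \((\varphi_{n}(V))\) and \((\varphi_{n}(\tilde V))\) agree up to the shift \(a\), i.e.\ they have the same tail. I expect the main obstacle to be the rigorous form of claims (a)–(b): verifying that the arc \(\mcE\), its distinguished ray, and the discrete bridge-point set are all genuinely tracked by the single parameter homeomorphism \(h\), and that passing to tails legitimately absorbs the ambiguity arising from \(h\) being defined only near \(0\) and from the finitely many small-\(n\) bridge points whose existence or enumeration by turns may fail.
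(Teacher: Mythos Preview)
Your proposal is correct and follows essentially the same approach as the paper: the paper's proof is terse (it calls the argument ``straightforward''), but it proceeds exactly via your claims (a)--(c), using \(Sep\)-tracking to send \(\mcE\) to \(\tilde\mcE\) and bridge points to bridge points, monotonicity of \(h|_{\mcE}\) to force \(h(\alpha_{n})=\tilde\alpha_{a+n}\) for some fixed \(a\), and then \autoref{thm:ears-glasses} applied at each \(\alpha_{n}\). Your added detail on why the no-cycle ray is topologically distinguished and why monotonicity yields a constant shift simply fleshes out what the paper leaves implicit.
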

\begin{proof}
    The proof if straightforward.
    Due to the $Sep$-tracking property, the homeomorphism~\(h\) of the parameter spaces sends the curve~\(\mcE\) of the family~\(V\) to the similarly defined curve~\({\tilde \mcE}\) of~the family~\({\tilde V}\), and each point \(\alpha _{n}\) to one of the points~\({\tilde \alpha }_{m}\).
    Similarly, \(h^{-1}\) sends~\({\tilde \mcE}\) to~\(\mcE\), and each point~\({\tilde \alpha }_{m}\) to one of the~points~\(\alpha _{n}\).

    Since \(h\) preserves the relative order of \(\alpha _{n}\), for some~\(a \in \bbZ\) and sufficiently large \(n\) we have
    \[
        h(\alpha _{n})={\tilde \alpha }_{a+n}.
    \]
    The germ of~\(V\) at~\(\alpha _{n}\) has \enquote{glasses} or \enquote{ears} formed by $\mcL_1(\alpha_{n})$ and $\mcR_1(\alpha_{n})$, and is orbitally topologically equivalent to the germ of~\(\tilde{V}\) at~\({\tilde \alpha }_{a+n}\).
    Thus \autoref{thm:ears-glasses} implies that \(\varphi_{a+n}(\tilde V)=\varphi_{n}(V)\), hence the sequences \((\varphi_{n}(V))\) and \((\varphi_{n}(\tilde V))\) have the same tail.
\end{proof}
This also implies the following.
\begin{corollary}%
    \label{cor:pc:seq}
    For any vector field  $v_0\in \WG_{1,1}^{\pitchfork, 3}$, generic \(3\)-parameter unfoldings of $v_0$ are equivalent if and only if the corresponding sequences \((\varphi_{n}(V))\) have the same tail.

    So the classification of generic \(3\)-parameter unfoldings of $v_0$ has infinitely many numerical invariants.
\end{corollary}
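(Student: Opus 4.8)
The forward implication---equivalent unfoldings have the same tail---is exactly \autoref{thm:pc:seq}, so the real content is the converse together with the concluding sentence. The plan is to show that equal tails force weak topological equivalence with \(Sep\)-tracking, and then to exhibit enough freedom in the numbers \(\varphi_n\) to rule out any finite complete invariant. Fix two transverse \(3\)-parameter unfoldings \(V,\tilde V\in\WG_{1,1}^{\pitchfork,3}\) of the same \(v_0\in\WG_{1,1}\), and assume \(\varphi_{a+n}(\tilde V)=\varphi_n(V)\) for some \(a\in\bbZ\) and all large \(n\).

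First I would reconstruct the bifurcation diagram of a transverse unfolding. As \(\WG_{1,1}\) has codimension \(3\), transversality turns \((\bbR^3,0)\) into a chart in which the three defining degeneracies (the semi-stable cycle and the two lens loops) cut out three smooth hypersurfaces meeting transversally at the origin; the \enquote{both loops survive} locus is the curve \((\mcE,0)\) along which the cycle parameter varies and the connection moments \(\alpha_n\) accumulate at \(0\). Since this combinatorial picture is identical for \(V\) and \(\tilde V\), I would build a germ of homeomorphism \(h\colon(\bbR^3,0)\to(\bbR^3,0)\) carrying the hypersurfaces and the curve \(\mcE\) of \(V\) to those of \(\tilde V\), monotone along \(\mcE\), with \(h(\alpha_n)=\tilde\alpha_{a+n}\) for all large \(n\); the finitely many small-index points fall outside a small enough representative of the germ and are harmless.

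Next I would assemble the fibered conjugacy \(H_\alpha\), continuous in \(\alpha\) and tracking the marked saddles and separatrices. Off the bifurcation hypersurfaces, \(v_\alpha\) is structurally stable with a locally constant phase portrait matching that of \(\tilde v_{h(\alpha)}\), so a conjugacy exists region by region; at a connection parameter \(\alpha_n\) the lenses \(\mcL_1,\mcR_1\) form \enquote{glasses} or \enquote{ears}, and because these germs share both the ambient combinatorics and the common value \(\varphi_n(V)=\varphi_{a+n}(\tilde V)\), they are equivalent by the completeness of the local classification of such unfoldings from~\cite{GKS-glasses}, of which \autoref{thm:ears-glasses} records only the invariance half. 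It then remains to glue these local conjugacies into a single \(Sep\)-tracking homeomorphism depending continuously on \(\alpha\) up to \(\alpha=0\), where \(v_0=\tilde v_0\) lets us take \(H_0=\mathrm{id}\).

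I expect the gluing near the origin to be the main obstacle. As \(\alpha\to0\) along \(\mcE\), the separatrices \(u_1(\alpha)\) and \(s_1(\alpha)\) wind ever more times through the \enquote{neck} left by the destroyed cycle, so the number of turns \(n\) of the bridge tends to infinity, and the conjugacy must match these growing winding patterns uniformly in \(n\) while still converging to the trivial conjugacy at \(\alpha=0\); producing a uniform-in-\(n\) version of the glasses construction, compatible across all connection moments, is where the work concentrates. Finally, the concluding assertion follows from a realization argument: for each \(N\) one can choose transverse unfoldings that agree in \(\varphi_1,\dots,\varphi_N\) but have distinct tails, hence share the first \(N\) invariants yet are inequivalent by the equivalence just established; thus no finite collection of the \(\varphi_n\) is complete, and the classification carries infinitely many numerical invariants.
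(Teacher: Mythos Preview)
The paper gives no separate proof of this corollary: it is stated as an immediate consequence of \autoref{thm:pc:seq} (\enquote{This also implies the following}), and the sentence about infinitely many invariants is justified by the realization paragraph that follows the corollary. In particular, the paper never argues the \enquote{if} direction (same tail $\Rightarrow$ equivalent); it simply asserts it. The concluding sentence is obtained in the paper from the \enquote{only if} direction alone together with realization: local surgery near $L_1$ and $R_1$ lets one replace $\lambda(\alpha),\rho(\alpha)$ by arbitrary smooth functions without moving the sequence $\alpha_n$, so uncountably many distinct tails occur among unfoldings of the same $v_0$, and by \autoref{thm:pc:seq} these are pairwise inequivalent.

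Your proposal goes much further than the paper by attempting to actually prove the converse. Two comments. First, you build $H_\alpha$ \enquote{continuous in $\alpha$}, but weak topological equivalence with $Sep$-tracking (\autoref{def:tracking}) imposes no continuity in $\alpha$; this unnecessary requirement is precisely what creates your \enquote{main obstacle} of gluing uniformly in $n$ as $\alpha\to 0$. Drop it and no gluing is needed. Second, even after that simplification, the converse still requires matching the full three-dimensional bifurcation diagrams and checking equivalence on every stratum, not just at the connection moments $\alpha_n$ on $\mcE$; this is plausible for generic transverse unfoldings of a fixed $v_0$, but it is work the paper does not do and your sketch does not complete. Finally, your last paragraph is correct in spirit but does not need the \enquote{if} direction: to conclude that families with different tails are inequivalent you only use \autoref{thm:pc:seq}, so realization plus \autoref{thm:pc:seq} already yields infinitely many invariants, exactly as the paper argues.
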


It is easy to see that many sequences can be realized as invariants, i.e.\ can be represented as \(\varphi_{n}(V)\) for some \(V\in\WG_{M,N}^{\pitchfork, M+N+1}\).
E.g., one can perform a local surgery near \(L_{1}\) and \(R_{1}\) to replace \(\lambda(\alpha)\) and \(\rho(\alpha)\) with any two smooth functions that satisfy $\lambda(\alpha)<1<\rho(\alpha)$, without changing the sequence $\alpha_n$.

However, these invariants are not provided by robust invariant functions, and we do not know if the classification of non-local \(4\)-parameter families \(V\) transverse to \(\WG_{1,1}\) has any invariants at all.

\begin{remark}
    One can define similar sequences \(\varphi _{i,j,n}\) based on another pair of lenses \((\mcL_{i}, \mcR_{j})\) instead of \((\mcL_1, \mcR_1)\).
    In the generic case, there is a well-defined cyclic order on these sequences:
    between \(\varphi _{1,1,n}\) and \(\varphi _{1,1,n+1}\) there is exactly one term of each of the other sequences \(\varphi _{i,j,n}\), and \(\varphi _{i,j,n}\) appear in the same order for each \(n\).
    Therefore, if we enumerate all these sequences so that \(\varphi _{i,j,n}\) is between \(\varphi _{1,1,n}\) and \(\varphi _{1,1,n+1}\), and similarly for~\({\tilde V}\), then the number~\(a\) provided by \autoref{thm:pc:seq} is the same for all the sequences \(\varphi _{i,j,n}\).
\end{remark}

\section{Ensemble “lips” that generates \enquote{glasses}}%
\label{sec:lips}

\subsection{Saddle-nodes}%
\label{sub:saddle-nodes}

Recall that a \emph{saddle-node} of a planar vector field~\(v\) is a zero of~\(v\) such that the linearization of~\(v\) at this point has exactly one zero eigenvalue.
In the eigenvector basis we have \(\dot x=X(x, y)=O(x^{2}+y^{2})\), \(\dot y=Y(x, y)=\lambda y+\dotsb\), \(\lambda\neq0\).
Generically, the second derivative \(\frac{\partial^{2} X}{\partial x^{2}}\) is not zero, and the vector field is linearly conjugate to \(\dot x=x^{2}+\dotsb\), \(\dot y=\lambda y+\dotsb\).

If \(\lambda>0\), then the only trajectory that converges to $(0,0)$ in the future is called the \emph{stable separatrix} of the saddle-node.
The trajectories that converge to $(0,0)$ in the past form a \emph{repelling parabolic sector} of a saddle-node.
Two trajectories on the boundaries of the parabolic sector are called \emph{unstable separatrices} of a saddle-node.
Similarly, a saddle-node with \(\lambda<0\) has one unstable separatrix and two stable separatrices that bound an \emph{attracting parabolic sector} of a saddle-node.

We shall use the following finitely smooth local normal form of vector fields with saddle-nodes.
\begin{theorem}
    [{Ilyashenko, Yakovenko, see~\cite[Theorem 5]{IYa91}}]%
    \label{thm:IYa}
    For any \(r\) there exists \(N\) such that any \(C^{N}\) smooth unfolding of a saddle-node is \(C^{r}\) conjugate to the following normal form:
    \begin{equation}
        \label{eq:SN-unfolding}
        \begin{aligned}
            \dot x &= \varepsilon(\alpha)+x^{2}+a(\alpha)x^{3};\\
            \dot y &= \lambda(x, \alpha)y,
        \end{aligned}
    \end{equation}
    where \(\varepsilon\), \(a\), and \(\lambda\) are smooth functions, \(\lambda(0, 0)=\lambda\neq 0\), \(\varepsilon(0)=0\).
\end{theorem}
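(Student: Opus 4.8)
The plan is to pass to the \emph{formal} normal form first and then realize the normalizing change of coordinates by a genuine finitely smooth map, keeping track of the loss of derivatives. First I would record the resonances. The linearization at the saddle-node has eigenvalues $0$ (along $x$) and $\lambda\neq0$ (along $y$), so a monomial $x^{k}y^{m}$ is resonant in the $x$-equation precisely when $m\lambda=0$, i.e.\ $m=0$, and in the $y$-equation precisely when $m\lambda=\lambda$, i.e.\ $m=1$. Hence the formal normal form of a single saddle-node is forced to have the shape $\dot x=X(x)$, $\dot y=y\,\Lambda(x)$ with $X(x)=x^{2}+\dotsb$ and $\Lambda(0)=\lambda$: the $y$-equation is already linear in $y$, and the $x$-equation depends on $x$ alone.

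Next I would normalize the scalar factor $\dot x=X(x)=x^{2}+c_{3}x^{3}+\dotsb$. A formal change $x\mapsto x+O(x^{2})$ together with a rescaling removes every term beyond the cubic one, leaving $\dot x=x^{2}+a\,x^{3}$, where $a$ is the classical formal modulus of a parabolic germ and cannot be eliminated. For the unfolding I would treat $\alpha$ as an extra variable with trivial dynamics and repeat the reduction fibrewise over the parameter: the resonant $x$-equation becomes $\dot x=\eps(\alpha)+x^{2}+a(\alpha)x^{3}$, where the free term $\eps(\alpha)$, $\eps(0)=0$, appears because the family is a \emph{versal} unfolding and must account for the splitting of the saddle-node into two hyperbolic equilibria, while the $y$-equation becomes $\dot y=\lambda(x,\alpha)y$. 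This is exactly the formal counterpart of~\eqref{eq:SN-unfolding}.

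The geometric heart of the argument is to realize these formal transformations by honest finitely smooth ones, which I would do in two steps. First, straighten the \emph{center manifold} $y=h(x,\alpha)$ of the unfolding to $\{y=0\}$; this makes $\dot y$ divisible by $y$ and identifies the reduced dynamics on $\{y=0\}$ with the scalar family of the previous paragraph. Second, straighten the invariant \emph{strong foliation} whose leaves are the strong stable (or unstable) fibers transverse to the center manifold; since $\dot x$ is constant along each such fiber, this forces the $x$-equation into the required $y$-independent form, and a fibrewise linearization of the resulting hyperbolic node then turns $\dot y$ into the exactly linear $\dot y=\lambda(x,\alpha)y$. Both the center manifold and the foliation are built by a Hadamard--Perron / fiber-contraction scheme applied to the graph transform in the hyperbolic $y$-direction.

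The main obstacle---and the reason the conjugacy is only \emph{finitely} smooth, with $N$ depending on $r$---is the control of differentiability. Because the center eigenvalue is $0$, the spectral gap between the center and the hyperbolic direction is arbitrarily large, so the invariant manifold, the foliation, and the conjugating maps are $C^{r}$ only, each construction losing a definite number of derivatives determined by the hyperbolicity ratio; one compensates by taking the initial regularity $N$ large. I would quantify this loss through the standard estimates for the graph transform and for the homological equations solved at each normalization step---here the small divisors are not a genuine arithmetic obstruction but merely a bound on attainable smoothness---which yields the asserted dependence $N=N(r)$.
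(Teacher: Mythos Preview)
The paper does not prove this theorem at all: it is quoted from \cite[Theorem~5]{IYa91} and used as a black box. The only argument the paper adds is a short paragraph justifying the \emph{finitely} smooth statement, since the original in \cite{IYa91} is formulated for $C^{\infty}$ families. That justification is not a proof of the normal form; it merely observes that the single place where infinite smoothness is invoked in \cite{IYa91} is an appeal to a theorem of Takens~\cite[Sec.~4]{Takens_1971}, and that Takens' argument in turn only uses the existence of finitely smooth center, center-stable, and center-unstable manifolds, which is available for $C^{N}$ vector fields by standard invariant-manifold theory (the paper cites~\cite[Sec.~1.3]{Chow_Li_Wang_1994}). Hence there is no ``paper's own proof'' to compare your proposal against.

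That said, your sketch is a reasonable outline of how the cited result is actually established, and in particular your geometric step---straighten the center manifold, then the strong invariant foliation, then linearize fibrewise---is essentially the content of the Takens theorem that the paper singles out as the key ingredient. Your resonance computation leading to the formal shape $\dot x=X(x)$, $\dot y=\Lambda(x)y$ and the one-dimensional reduction to $\eps(\alpha)+x^{2}+a(\alpha)x^{3}$ are also the standard moves. Where your write-up is thinnest is exactly the point the paper highlights: the quantitative bookkeeping that produces a concrete $N=N(r)$. You gesture at ``standard estimates for the graph transform and for the homological equations,'' but to turn this into a proof you would need to cite or reproduce the precise finite-smoothness invariant-manifold statements (as the paper does) rather than assert that the loss of derivatives is ``definite.'' If you want a self-contained argument, that is the gap to fill; if you are content to cite, then your proposal and the paper's treatment coincide.
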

This theorem is formulated in~\cite{IYa91} for infinitely smooth families of vector fields.
However, this assumption is used only once to apply a theorem by \citeauthor{Takens_1971}~\cite[Sec.\,4]{Takens_1971}.
The latter theorem is formulated for infinitely smooth vector fields as well but this assumption is used in the proof only once to construct finitely smooth invariant manifolds: center, center-stable, and center-unstable manifolds.
This can be done for finitely smooth vector fields too, see, e.g.,~\cite[Sec. 1.3]{Chow_Li_Wang_1994}.
Thus both theorems \cites[Sec.\,4]{Takens_1971}[Theorem 5]{IYa91} apply to sufficiently smooth vector fields.

For an unfolding normalized as in~\eqref{eq:SN-unfolding}, the phase portrait depends on the signs of \(\varepsilon\) and \(\lambda\).
Namely,
\begin{itemize}
  \item for \(\eps(\alpha)<0\), \(v_{\alpha}\) has a saddle and a node near the origin;
  \item for \(\eps(\alpha)=0\), \(v_{\alpha}\) has a saddle-node;
  \item for \(\eps(\alpha)>0\), \(v_{\alpha}\) has no singular points near the origin.
    The correspondence map from \(\Gamma^{-}=\set{x=-1}\) to \(\Gamma^{+}=\set{x=1}\) has the form \(\Delta(y)=C(\eps)y\).
    In the case \(\lambda(0)>0\), we have \(C(\eps)\to \infty\) as \(\eps(\alpha)\to 0+\).
    In the other case \(\lambda(0)<0\), we have \(C(\eps)\to 0\) as \(\varepsilon(\alpha)\to 0+\).
\end{itemize}

\begin{remark}%
    \label{rem:affine}
    Consider a vector field \(v_{0}\) having a~saddlenode of the form \(\dot x=x^{2}+\dots, \dot y=-y+\dots\).
    The rectifying chart provided by \autoref{thm:IYa} is not uniquely defined, and different unfoldings of the same vector field can produce different rectifying charts for \(\alpha=0\).
    However, all these rectifying charts define the same \emph{affine structure} on the cross-section \(\Gamma^{-}=\set{x=-1}\).
    More precisely, given rectifying charts of two unfoldings of \(v_{0}\), the correspondence map along \(v_{0}\) between \(\Gamma^{-}\) of the first chart to \(\Gamma^{-}\) of the second chart is affine in these charts.

    Indeed, for  two segments \(I, J \subset \Gamma^{-}\), the ratio of their lengths in any rectifying chart can be determined using the dynamics of \(v_{0}\) only, in the following way. Take the saturations of \(I\) and \(J\) by forward trajectories  of \(v_{0}\), and consider the intersections of these saturations with \(x=-\eps\).
    Then for any rectifying chart \(\xi\), the ratio of the lengths of these intersections tends to \(|\xi(I)|:|\xi(J)|\).
    This follows from the fact that this limit does not change when normalizing the family.

    From now on, for a saddlenode, we will work in the rectifying chart on \(\Gamma^{-}\) (for \(\lambda<0\)) and on \(\Gamma^{+}\) (for \(\lambda>0\));
    this chart is uniquely determined by \(v_{0}\) up to an affine coordinate change as discussed above. 
\end{remark}

\begin{remark}%
    \label{rem:smooth}
    In this section we replace the smoothness assumption in the definitions of \(\Vect\), \(\mcV\) etc.\ with an assumption \enquote{\(v_{\alpha}(x)\) is \(C^{N}\) smooth in \((\alpha, x)\)} for some unspecified large \(N\).
    We assume that for a \(C^{N}\)-smooth unfolding of a saddlenode there exists a \(C^{1}\) coordinate change that brings it to the form~\eqref{eq:SN-unfolding}.
    This property holds for sufficiently large \(N\) due to \autoref{thm:IYa}.
    It seems that actually \(N=3\) still works but we failed to find a reference for this fact.
\end{remark}

\subsection{Ensemble \enquote{lips}}\label{sub:ensemble-lips}

\begin{definition}%
    \label{def:lips}
    A vector field is said to have \emph{ensemble \enquote{lips}}, see \autoref{fig:lips-orig}, if
    \begin{itemize}
      \item it has two saddlenodes \(S_{1}\) and \(S_{2}\);
      \item \(S_{1}\) has a repelling parabolic sector;
      \item \(S_{2}\) has an attracting parabolic sector;
      \item the only stable separatrix of \(S_{1}\) coalesces with the only unstable separatrix of~\(S_{2}\) forming the bridge~\(b\);
      \item a nonempty open set of trajectories visit both parabolic sectors.
    \end{itemize}
\end{definition}
Choose cross-sections \(\Gamma_1\), \(\Gamma_2\) near \(S_1\), \(S_2\) respectively that intersect all trajectories in parabolic sectors of $S_1$ and $S_2$.
The last requirement means that the correspondence map $P\colon \Gamma_1\mapsto \Gamma_2$ is defined on a non-empty open set.
We will always choose orientations on \(\Gamma_1\) and \(\Gamma_2\) so that $P$ preserves orientation.

\begin{figure}
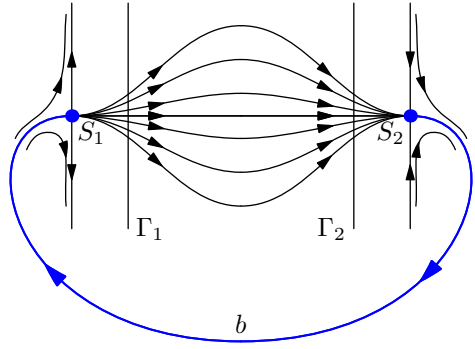

    \centering
    \asyinclude{lips}
    \caption{Ensemble \enquote{lips}}\label{fig:lips-orig}
\end{figure}

Vector fields with an ensemble \enquote{lips} and no other degeneracies form a Banach submanifold of codimension~\(3\) in \(\Vect\).
In the simplest case, the intersection~\(U\) of  attracting and repelling basins of the parabolic sectors of \(R\) and \(L\) contains no other singular points of~\(v\).
Possible bifurcations of vector fields of this type were studied in~\cite{KS}.
In particular, \citeauthor{KS} proved that these bifurcations can generate arbitrarily many limit cycles.
We will use the following lemma from~\cite{KS}.
\begin{lemma}%
    [{cf.~\cite[Sec. 3]{KS}}]%
    \label{lem:affine-approx}
    Let \(v_{0}\) be a vector field with ensemble \enquote{lips}.
    Let \(V=\set{v_{\alpha}}\) be an unfolding of \(v_{0}\) transverse to the codimension \(3\) Banach submanifold of vector fields with ensemble \enquote{lips}.
    Let \(\Gamma_{1}\), \(\Gamma_{2}\) be the cross-sections introduced above with rectifying charts as~in~\autoref{thm:IYa}.
    Then for any affine map \(A\colon\Gamma_{2}\to\Gamma_{1}\) there exists a sequence \(\alpha_{n}\to 0\) such that the correspondence map \(\Delta_{\alpha_{n}}\) tends to \(A\) in \(C^{1}\) norm as \(n\to\infty\).
\end{lemma}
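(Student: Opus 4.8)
The plan is to decompose the perturbed correspondence map \(\Delta_{\alpha}\colon\Gamma_{2}\to\Gamma_{1}\) into three elementary transitions and to exploit the opposite asymptotics of the two saddle-node passages. First I perturb so that \emph{both} saddle-nodes disappear: \(\varepsilon_{1}(\alpha)>0\) and \(\varepsilon_{2}(\alpha)>0\) in the normal form of \autoref{thm:IYa}. Write \(\Gamma_{1}^{-}\) and \(\Gamma_{1}^{+}=\Gamma_{1}\) for the entrance and exit sections of the corridor at \(S_{1}\), and \(\Gamma_{2}^{-}=\Gamma_{2}\), \(\Gamma_{2}^{+}\) for those at \(S_{2}\). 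A trajectory realizing \(\Delta_{\alpha}\) crosses \(\Gamma_{2}\), passes through the destroyed saddle-node \(S_{2}\) to \(\Gamma_{2}^{+}\), follows the former bridge to \(\Gamma_{1}^{-}\), and passes through the destroyed \(S_{1}\) to \(\Gamma_{1}\). Hence \(\Delta_{\alpha}=\Delta_{1}\circ Q_{\alpha}\circ\Delta_{2}\), where \(\Delta_{2}(t)=C_{2}(\varepsilon_{2})\,t\) and \(\Delta_{1}(y)=C_{1}(\varepsilon_{1})\,y\) are the two corridor maps, linear in the rectifying charts of \autoref{thm:IYa}. By \autoref{sub:saddle-nodes}, \(C_{2}\to 0\) (as \(\lambda<0\) at \(S_{2}\)) and \(C_{1}\to\infty\) (as \(\lambda>0\) at \(S_{1}\)) when \(\varepsilon_{2},\varepsilon_{1}\to 0^{+}\). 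The bridge transition \(Q_{\alpha}\colon\Gamma_{2}^{+}\to\Gamma_{1}^{-}\) meets no saddle-node, so it is a diffeomorphism depending smoothly on \(\alpha\), with \(Q_{\alpha}\to Q_{0}\) and \(Q_{0}(0)=0\) (the bridge connection at \(\alpha=0\)).

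In these charts the composition reads \(\Delta_{\alpha}(t)=C_{1}\,Q_{\alpha}(C_{2}\,t)\). Since \(C_{2}\to0\), the argument \(C_{2}t\) stays near \(0\) on any fixed bounded domain, and I expand \(Q_{\alpha}(C_{2}t)=Q_{\alpha}(0)+Q_{\alpha}'(0)\,C_{2}t+O\!\left((C_{2}t)^{2}\right)\). Writing \(\mu(\alpha):=Q_{\alpha}(0)\) for the bridge splitting (\(\mu(0)=0\)) and \(q:=Q_{0}'(0)\), I get
\[
  \Delta_{\alpha}(t)=C_{1}\mu(\alpha)+C_{1}C_{2}\,Q_{\alpha}'(0)\,t+C_{1}\,O\!\left(C_{2}^{2}\right),
  \qquad
  \Delta_{\alpha}'(t)=C_{1}C_{2}\,Q_{\alpha}'(C_{2}t).
\]
Thus the slope of \(\Delta_{\alpha}\) is governed by the product \(C_{1}C_{2}\), with one factor diverging and the other vanishing, and the intercept by \(C_{1}\mu\). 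This is the key point of the ensemble \enquote{lips}: the diverging passage through \(S_{1}\) magnifies the shrinking passage through \(S_{2}\), so the product \(C_{1}C_{2}\) can be driven to any prescribed finite value.

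Now fix a target affine map \(A(t)=a\,t+b\); its slope \(a\) must have the sign of \(q\), and with the natural orientation of the sections (as for \(P\)) this sign is positive, so I realize every orientation-preserving affine map. Because \(V\) is transverse to the codimension-\(3\) submanifold of \enquote{lips} fields, the map \(\alpha\mapsto(\varepsilon_{1}(\alpha),\varepsilon_{2}(\alpha),\mu(\alpha))\) is a submersion at \(0\), so I may prescribe these three quantities independently near \(0\). Using that \(\varepsilon\mapsto C(\varepsilon)\) is continuous with \(C_{1}\to\infty\), \(C_{2}\to 0\), I choose \(C_{1}^{(n)}\to\infty\), solve \(C_{2}^{(n)}=a/(q\,C_{1}^{(n)})\to 0\) for \(\varepsilon_{2}^{(n)}\), and set \(\mu_{n}=b/C_{1}^{(n)}\to 0\); the corresponding \(\alpha_{n}\to 0\). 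Then \(C_{1}^{(n)}\mu_{n}=b\) and \(C_{1}^{(n)}C_{2}^{(n)}=a/q\), while \(Q_{\alpha_{n}}'(0)\to q\), so the intercept tends to \(b\) and \(\Delta_{\alpha_{n}}'(t)=C_{1}C_{2}\,Q_{\alpha_{n}}'(C_{2}t)\to (a/q)\,q=a\) uniformly on bounded domains; moreover the error \(C_{1}\,O(C_{2}^{2})=O\!\left((C_{1}C_{2})\,C_{2}\right)\to 0\). This gives \(\Delta_{\alpha_{n}}\to A\) in the \(C^{1}\) norm.

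The main obstacle is exactly the \(C^{1}\) control of the nonlinearity of \(Q_{\alpha}\): a priori the factor \(C_{1}\to\infty\) could amplify the quadratic and higher-order terms of \(Q_{\alpha}\) and ruin the convergence. The resolution is that the vanishing factor \(C_{2}\) shrinks the domain on which \(Q_{\alpha}\) is sampled fast enough — keeping \(C_{1}C_{2}\) bounded forces the higher-order contribution \(C_{1}\,O(C_{2}^{2})=O(C_{2})\) to be negligible. A secondary point is that \(C_{1}(\varepsilon_{1})\) and \(C_{2}(\varepsilon_{2})\) must actually sweep through all sufficiently large, respectively small, positive values so that \(C_{1}C_{2}=a/q\) is solvable along a sequence; this follows from the continuity and limits recalled in \autoref{sub:saddle-nodes}. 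Finally, the rectifying charts are canonical only up to an affine reparametrization (\autoref{rem:affine}), which is harmless here since \(A\) is affine and \(\Delta_{1},\Delta_{2}\) are linear in these charts.
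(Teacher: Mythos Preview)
Your argument is correct and follows essentially the same route as the paper's proof: the same three-factor decomposition through the two destroyed saddle-nodes and the bridge, the same observation that one corridor factor diverges while the other vanishes so their product can be tuned, and the same use of transversality to vary the two \(\varepsilon\)'s and the bridge splitting independently. Your write-up is in fact more careful than the paper's sketch---you explicitly bound the nonlinear remainder \(C_{1}\,O(C_{2}^{2})=O(C_{2})\) to justify \(C^{1}\) convergence, and you note the orientation constraint on the slope of \(A\), which the paper leaves implicit.
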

\begin{proof}
    Note that \(\Delta_{\alpha}=\Delta_{L,\alpha}\circ\Delta_{b,\alpha}\circ\Delta_{R,\alpha}\), where \(\Delta_{R,\alpha}\colon\Gamma_{2}\to\Gamma_{2}'\) is the correspondence map through the disappeared saddlenode~\(S_{2}\), \(\Delta_{b,\alpha}\colon\Gamma_{2}'\to\Gamma_{1}'\) is the correspondence map along the bridge~\(b\), and \(\Delta_{L,\alpha}\colon\Gamma_{1}'\to\Gamma_{1}\) is the correspondence map through the disappeared saddlenode \(S_{1}\).
    Recall that \(\Delta_{R,\alpha}(x)=C_{1}(\alpha)x\), \(C_{1}\ll 1\), and \(\Delta_{L,\alpha}(x)= C_{2}(\alpha)x\), \(C_{2}\gg 1\).
    Note that \(\Delta_{b,\alpha}(x)\approx \Delta_{b,\alpha}(0)+\Delta_{b,0}'(0)x\), hence \(\Delta_{\alpha}(x)\approx C_{1}(\alpha)\Delta_{b,0}'(0)C_{2}(\alpha)x+\Delta_{b,\alpha}(0)C_{2}(\alpha)\).
    Due to the transversality condition, we can change \(C_{1}\), \(C_{2}\), and \(\Delta_{b,\alpha}(0)\) independently, thus for some \(\alpha_{n}\to 0\), the map \(\Delta_{\alpha_{n}}(x)\) tends to  any prescribed affine map.
\end{proof}

In this section we study a modification of the ensemble \enquote{lips} that leads to arbitrarily many numerical invariants of classification of \(4\)-parameter infinitely smooth families of vector fields.
Namely, we will take \(n\) Cherry cells and one right lense, and use local surgery to paste them into the phase portrait of the vector field described above.

\begin{definition}
    A \emph{Cherry cell} is a vector field \(v\) in the square \([0,1]\times  [0,1]\) such that:

    \begin{itemize}
      \item \(v\) is equal to \((1, 0)\) in a neighbourhood of the boundary of the square;
      \item \(v\) has one saddle and one repellor (an unstable node or an unstable focus), and no other singularities;
      \item both unstable separatrices of the saddle intersect the right side \(\set{1}\times  [0,1]\) of the square;
      \item one stable separatrix intersects \(\set{0}\times  [0,1]\), the other one enters the repellor in the reverse time.
    \end{itemize}
\end{definition}
We shall say that the stable separatrix that intersects \(\set{0}\times[0,1]\) is the \emph{incoming} separatrix of the Cherry cell.
The first property makes Cherry cells useful for \emph{local surgery}: one can take a flow box of a vector field, and replace it with a vector field diffeomorphic to a Cherry cell without changing the vector field outside of the flow box.

\subsection{Surgery on an ensemble \enquote{lips} and the class $\LEG_n$}
Let us take \(n+1\) Cherry cells and one right lense, and paste them into the \enquote{lips}.
More precisely, we consider degenerate vector fields of the following class, see \autoref{fig:lips-modified}.
\begin{definition}
    The vector field \(v\) belongs to the class \(\LEG_n\) if

    \begin{itemize}
      \item it has an ensemble “lips” with saddlenodes \(S_1\), \(S_2\) as described above;
      \item it has \(n+1\) Cherry cells with saddles \(L_1,L_2, \dotsc, L_{n+1}\);
      \item all the characteristic numbers \(\lambda_{k}\) of \(L_{k}\) are greater than one;
      \item the stable separatrix \(l^{s}_{k}\) of \(L_{k}\) enters the parabolic sector of \(S_1\) in the past.
        Denote its intersection with the cross-section $\Gamma_1$ of the parabolic sector by \(x_k:=l^{s}_{k}\cap \Gamma_{1}\);
      \item the unstable separatrices \(u_{k}\) and \(l^{u}_{k}\) of \(L_{k}\) enter the parabolic sector of \(S_2\) in the future.
        Denote their intersections with the cross-section $\Gamma_2$ of  the parabolic sector by \(y_k:=u_{k}\cap\Gamma_{2}, z_k:=l^{u}_{k}\cap\Gamma_{2}\);
      \item \(v\) has a right lense \(\mcR=(R, r, I_{R}, \gamma_{R} , s)\) such that \(s\) enters the parabolic sector of \(S_{1}\) in the past, \(s\cap\Gamma_{1}=\set{x_{0}}\);
      \item \(x_{0}<x_{1}<\dotsb<x_{n+1}\), \quad \(y_{1}<z_{1}<y_{2}<z_{2}<\dotsb<y_{n+1}<z_{n+1}\).
    \end{itemize}
    We consider the saddles \(L_{1}, \dotsc, L_{n+1}\), \(R\), \(I\), and their separatrices as marked.
\end{definition}
\begin{figure}
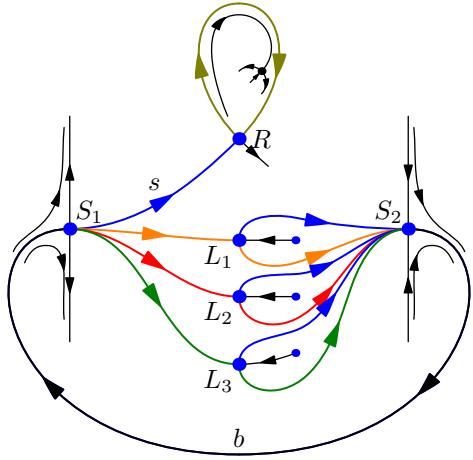

    \centering
    \asyinclude{lips-modified}
    \caption{A vector field of class \(\LEG_{2}\)}\label{fig:lips-modified}
\end{figure}

The last property uses orientations on $\Gamma_1, \Gamma_2$.
Recall that we choose orientation on the cross-sections so that the Poincaré map $P\colon \Gamma_1\to \Gamma_2$ preserves orientation.

\subsection{Numerical and functional invariants for ensemble \enquote{lips}}%
\label{sub:lips-theorems}

The following theorems are formulated for sufficiently smooth families of vector fields, see \autoref{rem:smooth}.

\begin{theorem}%
    \label{thm:LEG-num}
    For each \(n\ge 1\), there exists a non-empty relatively open subset \(\mbM\subset\LEG_n\) that has \(n\) independent robustly invariant functions, namely
    \begin{align*}
      \varphi_{k}(v)&=\frac{\ln\lambda_{k}(v)}{\ln \rho(v)}, & k&=1,\dotsc,n. 
    \end{align*}
\end{theorem}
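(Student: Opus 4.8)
The plan is to deduce \autoref{thm:LEG-num} from \autoref{thm:eg-closure}, applied once for each index \(k\in\{1,\dotsc,n\}\), and then to establish independence by a local surgery argument. For a fixed \(k\) I would apply \autoref{thm:eg-closure} with the four families of marked saddles \(L(\alpha)=L_k\), \(R(\alpha)=R\), \(I_L(\alpha)=L_{k+1}\), and \(I_R(\alpha)=I\). This is exactly the reason for keeping \(n+1\) Cherry cells while producing only \(n\) invariants: the \((k+1)\)-st cell supplies the saddle \(I_L\) whose separatrix winds onto the loop at \(L_k\). It then remains to exhibit, for every transverse unfolding \(V\in\mbM^{\pitchfork,m}\), a sequence \(\alpha_j\to 0\) for which \(v_{\alpha_j}\) carries \enquote{glasses} or \enquote{ears} formed by a left lense \(\mcL_k\) at \(L_k\) and the right lense \(\mcR\) at \(R\).

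To build the sequence I would work with the correspondence map \(\Delta_\alpha\colon\Gamma_2\to\Gamma_1\) through the two destroyed saddlenodes and the bridge, as in \autoref{lem:affine-approx}. The loop \(l_k\) of \(\mcL_k\) closes precisely when \(\Delta_\alpha(z_k)=x_k\), i.e.\ when the continuation of \(l^u_k\) returns to the entry point of \(l^s_k\), and the bridge \(b\) joining the incoming separatrix \(u_k\) of \(\mcL_k\) to the incoming separatrix \(s\) of \(\mcR\) forms precisely when \(\Delta_\alpha(y_k)=x_0\). The target affine map \(A_k\) determined by \(A_k(z_k)=x_k\), \(A_k(y_k)=x_0\) has positive slope \(a_k=(x_k-x_0)/(z_k-y_k)>0\) because \(x_0<x_k\) and \(y_k<z_k\). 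Keeping the loop \(r\) of \(\mcR\) intact (so that \(\mcR\) and its winding separatrix \(\gamma_R\) survive) and letting \(\eps_1,\eps_2\to 0^+\), the proof of \autoref{lem:affine-approx} shows that the slope and intercept of \(\Delta_\alpha\) vary independently; hence the map \(\alpha\mapsto(\Delta_\alpha(z_k)-x_k,\ \Delta_\alpha(y_k)-x_0)\) is a submersion near the \enquote{saddlenodes destroyed} locus, and its zero set accumulates at \(\alpha=0\), yielding \(\alpha_j\to 0\) with both separatrix connections exact.

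The heart of the argument, and the step I expect to be the main obstacle, is verifying that the configuration produced at \(\alpha_j\) is genuinely \enquote{glasses} or \enquote{ears}. Since \(\lambda_k>1\), the loop \(l_k\) is an attracting homoclinic loop; I would then check that the continuation of a separatrix of \(L_{k+1}\) lands on \(\Gamma_1\) just above \(x_k\) — indeed \(\Delta_{\alpha_j}(y_{k+1})=A_k(y_{k+1})>A_k(z_k)=x_k\) because \(y_{k+1}>z_k\) and \(a_k>0\) — and therefore winds onto \(l_k\), supplying the required separatrix \(\gamma_L\) with \(I_L=L_{k+1}\). This is exactly where I must pass to a relatively open subset \(\mbM\subset\LEG_n\): the subset is cut out by the open conditions guaranteeing that these returning separatrices land inside the attracting basin of \(l_k\) and that no other separatrix interferes, so that \(\mcL_k\) is a bona fide left lense. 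With the left and right lenses in place and joined by \(b\), \autoref{thm:eg-closure} gives that \(\varphi_k(v)=\ln\lambda_k(v)/\ln\rho(v)\) is robustly invariant.

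Finally, for independence I would argue that the characteristic numbers \(\lambda_1,\dotsc,\lambda_n\) and \(\rho\) can be prescribed independently by local surgeries supported near the saddles \(L_1,\dotsc,L_n\) and \(R\), which change neither the rest of the phase portrait nor the separatrix combinatorics defining \(\LEG_n\). Consequently the differential of \(\Phi=(\varphi_1,\dotsc,\varphi_n)\), whose block in the variables \(\lambda_1,\dotsc,\lambda_n\) is diagonal with nonzero entries \(1/(\lambda_k\ln\rho)\), has full rank \(n\) at every point of \(\mbM\), so the \(\varphi_k\) are independent robustly invariant functions.
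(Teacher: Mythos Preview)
Your approach matches the paper's: reduce each \(\varphi_k\) to \autoref{thm:eg-closure} by using \autoref{lem:affine-approx} to make \(\Delta_\alpha\) approximate the affine map \(A_k\) sending \((y_k,z_k)\mapsto(x_0,x_k)\), thereby creating the bridge and the loop \(l_k\) simultaneously, with \(L_{k+1}\) playing the role of \(I_L\). The paper packages this as \autoref{lem:LEG-perturb}, and the independence argument is as you say.

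Two points need tightening. First, the winding condition: your inequality \(\Delta_{\alpha_j}(y_{k+1})\approx A_k(y_{k+1})>x_k\) only says the separatrix \(u_{k+1}\) enters the region above \(l^s_k\) after one pass; it does not by itself imply that the orbit winds onto \(l_k\), which requires convergence of the iterates of \(P_\alpha\circ\Delta_\alpha\) to the fixed point \(z_k\). Moreover, your description of \(\mbM\) is phrased in terms of the \emph{perturbed} loop \(l_k\), which is circular since \(\mbM\) must be a subset of \(\LEG_n\), i.e.\ a condition on \(v_0\). The paper's condition is the right one: \(y_{k+1}\) lies in the basin of attraction of \(z_k\) under the \emph{unperturbed} map \(P\circ A_k\) (this is \eqref{eq:z-attracts-y} in \autoref{lem:technical}). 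Since \(\Delta_\alpha\to A_k\) in \(C^1\), this condition on \(v_0\) persists to give winding for \(v_{\alpha_j}\).

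Second, you do not address nonemptiness of \(\mbM\), which is part of the statement. The basin condition above is open but not automatic: \(A_k(y_{k+1})\) may land far above \(x_k\), outside the basin. The paper handles this by an inductive surgery (\autoref{lem:technical}): having built a vector field in \(\mbM_{n-1}\), one pastes the \((n{+}1)\)-st Cherry cell with \(y_{n+1}\) chosen inside the (already known, nonempty by~\eqref{eq:zk-attracts}) basin of \(z_n\) under \(P\circ A_n\), and places \(x_{n+1}>A_n(y_{n+1})\) so that the surgery does not disturb the earlier dynamics.
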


By \autoref{prop:func-invariants}, this immediately implies the following theorem.
\begin{theorem}%
    \label{thm:LEG-func}
    Given natural numbers \(0<d<n\), the classification of families \(\mbM^{\pitchfork,4+d}\) up to the weak equivalence with \(Sep\)-tracking has functional invariants.
    Namely, to each family \(V\in\mbM^{\pitchfork, 4+d}\) there corresponds a germ of a smooth function \(f_{V}\colon(\bbR^{d}, 0)\to(\bbR^{n}, a)\), and for equivalent families \(V\), \(\tilde V\), the germs \(f_{V}\) and \(f_{\tilde V}\) differ by a continuous change of variable in the domain.
\end{theorem}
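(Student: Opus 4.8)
The plan is to deduce the theorem from \autoref{prop:func-invariants} applied to the submanifold \(\mbM\subset\LEG_n\) furnished by \autoref{thm:LEG-num}. The data to feed into the proposition are the \(D=n\) independent robustly invariant functions and the codimension \(\codim\mbM=4\); then for \(0<d<n=D\) one sets \(k=d+\codim\mbM=4+d\), which is exactly the parameter count in the statement. So the whole argument is an application of the proposition once its hypotheses are checked and its conclusion is translated into the language of the statement.

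First I would verify the hypotheses of \autoref{prop:func-invariants}. The \(n\) independent robustly invariant functions \(\varphi_k=\ln\lambda_k/\ln\rho\) are supplied verbatim by \autoref{thm:LEG-num}, and since \(d\ge 1\) forces \(n\ge 2\), we indeed have \(D=n>1\). The codimension count gives \(\codim\mbM=4\): vector fields with an ensemble \enquote{lips} form a codimension-\(3\) submanifold (two saddlenodes plus the bridge), and the right lense \(\mcR\) contributes one further condition through its separatrix loop \(r\); all remaining features of \(\LEG_n\)—the Cherry cells and the entry of the various separatrices into the parabolic sectors of \(S_1\) and \(S_2\)—are open conditions and cost no codimension.

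The one hypothesis requiring genuine verification, and which I expect to be the main (indeed essentially the only) nontrivial point, is that \(\mbM\) is \emph{topologically distinguished} in the sense of \autoref{def:top-disting}. I would argue that the defining configuration of \(\LEG_n\) is recognizable from the phase portrait together with the marked saddles and separatrices: the two saddlenodes with the bridge, the right-lense loop, and the combinatorics of the incoming and unstable separatrices of the \(L_k\) and of \(R\) are all preserved by any orbital topological equivalence with \(Sep\)-tracking. Hence a field \(w\) in a small neighbourhood \(\mcU\supset\mbM\) that is equivalent (with \(Sep\)-tracking) to some \(v\in\mbM\) must display the same degeneracies, forcing \(w\in\mbM\); shrinking \(\LEG_n\) to a relatively open \(\mbM\) removes any competing degeneracies nearby. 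This is exactly topological distinguishedness.

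Granting these hypotheses, \autoref{prop:func-invariants} yields a functional invariant of rank \((d,n)\), and it remains to unwind it into the stated form. For every \(V\in\mbM^{\pitchfork,4+d}\) the locus \(S_V=V^{-1}(\mbM)\) is, by transversality, a germ of a smooth \(d\)-dimensional submanifold of \((\bbR^{4+d},0)\), and \(f_V=\Phi\circ V|_{S_V}\) with \(\Phi=(\varphi_1,\dotsc,\varphi_n)\) is a germ \(f_V\colon(S_V,0)\to(\bbR^n,a)\); identifying \((S_V,0)\) with \((\bbR^d,0)\) via any chart gives the germ \(f_V\colon(\bbR^d,0)\to(\bbR^n,a)\) of the statement, smooth because the characteristic numbers depend smoothly on the field and \(V\) is smooth. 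For equivalent families \(V,\tilde V\), topological distinguishedness gives \(h(S_V)=S_{\tilde V}\) for the parameter homeomorphism \(h\), and robust invariance of each \(\varphi_k\) gives \(f_{\tilde V}(h(\alpha))=f_V(\alpha)\) on \(S_V\); thus \(f_V\) and \(f_{\tilde V}\) differ by the continuous change of variable \(\chi=h|_{S_V}\), as claimed. The realizability that makes these genuine functional invariants then follows, exactly as in the proof of \autoref{prop:func-invariants}, from the independence of the \(\varphi_k\), since \(\Phi(\mbM)\) contains a nonempty open set.
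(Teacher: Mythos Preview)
Your approach is correct and essentially identical to the paper's: the paper states that \autoref{thm:LEG-func} follows immediately from \autoref{thm:LEG-num} via \autoref{prop:func-invariants}, and you carry out precisely this reduction. You are in fact somewhat more careful than the paper, since you explicitly verify the codimension count and flag the topological distinguishedness of \(\mbM\) as the one hypothesis needing genuine justification, whereas the paper leaves this implicit.
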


\subsection{Technical assumption}%
\label{sub:technical-assumption}
In this section we describe the open set \(\mbM\) from \autoref{thm:LEG-func}.
Take $v\in \LEG_n$.

For each \(k=1,\dotsc,n\), let \(A_{k}\colon \Gamma_{2}\to\Gamma_{1}\) be the unique map such that \(A_{k}(y_{k})=x_{0}\), \(A_{k}(z_{k})=x_{k}\), and \(A_{k}\) is affine in the rectifying charts on \(\Gamma_{2}\), \(\Gamma_{1}\).
Due to \autoref{rem:affine}, these rectifying charts are uniquely defined up to an affine change of coordinates, so the latter assumption depends only on \(v_{0}\), not on (a rectifying chart of) its unfolding \(v_{\alpha}\).

The Poincaré map  \(P\colon \Gamma_{1}\to\Gamma_{2}\) along $v_0$ has jump discontinuities at the points \(x_{k}\), and we define \(P(x_{k}):=z_{k}\).
Then the point \(z_{k}\) is a fixed point of the map \(P\circ A_{k}\), and we have
\begin{align}
  P(x)&\approx z_{k}+C_{k}{(x-x_{k})}^{\lambda_{k}} & \text{as }x&\to x_{k}+0;\nonumber\\
  (P\circ A_{k})(z)&\approx z_{k}+C_{k}'{(z-z_{k})}^{\lambda_{k}} & \text{as }z&\to z_{k}+0.\label{eq:zk-attracts}
\end{align}
Recall that \(\lambda_{k}>1\), hence \(z_{k}\) strongly attracts orbits of \(P\circ A_{k}\) from the right.

We will prove \autoref{thm:LEG-func} for the subset \(\mbM\subset\LEG_{n}\) from the following lemma.
\begin{lemma}%
    \label{lem:technical}
    Let \(\mbM_{n}\subset\LEG_{n}\) be the set of vector fields such that for each \(k=1,\dotsc,n\) the point \(y_{k+1}\) belongs to the basin of attraction of~\(z_{k}\) with respect to \(P\circ A_{k}\):
    \begin{equation}
        \label{eq:z-attracts-y}
        \lim_{l\to\infty}{(P\circ A_{k})}^{l}(y_{k+1})=z_{k}.
    \end{equation}
    Then for each \(n=0,1,\dotsc\), the set \(\mbM_{n}\) is nonempty and relatively open in \(\LEG_{n}\).
\end{lemma}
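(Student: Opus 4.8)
The plan is to prove the two assertions separately, both leaning on the structural fact recorded in~\eqref{eq:zk-attracts}: the point \(z_{k}\) is a \emph{super-attracting} fixed point of \(P\circ A_{k}\) from the right. Concretely, \((P\circ A_{k})(z)=z_{k}+C_{k}'(z-z_{k})^{\lambda_{k}}\bigl(1+o(1)\bigr)\) with \(\lambda_{k}>1\), so there is a \(t_{k}>0\) for which the interval \(J_{k}:=(z_{k},z_{k}+t_{k})\) is mapped strictly into itself and every orbit starting in \(J_{k}\) decreases monotonically to \(z_{k}\). This \(J_{k}\) is the immediate basin from the right; its existence and a lower bound for its length are determined by the one-sided jet of \(P\) at \(x_{k}\) (hence by \(\lambda_{k}\) and \(C_{k}\)) together with the slope of the affine map \(A_{k}\), which in turn is fixed by the positions \(x_{0},x_{k},y_{k},z_{k}\) through \(A_{k}(y_{k})=x_{0}\), \(A_{k}(z_{k})=x_{k}\).

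For nonemptiness I would build a single \(v\in\LEG_{n}\) by the surgery of \autoref{sub:ensemble-lips}, choosing the separatrix crossing points so that~\eqref{eq:z-attracts-y} holds for every \(k\) simultaneously. Proceeding by induction on \(k\): once the Cherry cell \(L_{k}\), its characteristic number \(\lambda_{k}>1\), and the points \(x_{0},x_{k},y_{k},z_{k}\) are in place, the interval \(J_{k}\) is determined and has positive length; I then place the crossing point \(y_{k+1}=u_{k+1}\cap\Gamma_{2}\) of the next cell inside \(J_{k}\), which is compatible with the required ordering \(z_{k}<y_{k+1}<z_{k+1}\). With \(y_{k+1}\in J_{k}\) the orbit \((P\circ A_{k})^{l}(y_{k+1})\) stays in \(J_{k}\) and converges to \(z_{k}\), so~\eqref{eq:z-attracts-y} holds. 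The freedom to prescribe crossing points on \(\Gamma_{2}\) is exactly what local surgery provides, so such a \(v\) exists; for \(n=0\) the condition is vacuous and \(\mbM_{0}=\LEG_{0}\).

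For openness I would fix \(v_{*}\in\mbM_{n}\) and exploit that \(J_{k}\) is a \emph{robust} trapping interval. Since the orbit of \(y_{k+1}\) converges to \(z_{k}\), some finite iterate \(w_{k}:=(P\circ A_{k})^{l_{0}}(y_{k+1})\) lies in \(J_{k}\). As \(v\) varies in \(\LEG_{n}\), all the ingredients depend continuously on \(v\): the cross-sections and their rectifying charts (canonical up to an affine change by \autoref{rem:affine}), the points \(x_{j},y_{j},z_{j}\), the characteristic numbers \(\lambda_{k}\), the Poincaré map \(P\) away from its jumps, and hence the affine maps \(A_{k}\) and the intervals \(J_{k}\). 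Therefore a slightly shrunk \(J_{k}\) is still trapping, while the finite orbit segment \((P\circ A_{k})^{l}(y_{k+1})\), \(l\le l_{0}\), varies continuously, so its \(l_{0}\)-th term remains in the trapping interval for every \(v\) near \(v_{*}\); this forces \(v\in\mbM_{n}\), giving openness.

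The main obstacle is the continuity bookkeeping in this last step, for two reasons. First, the maps \(A_{k}\) are defined through the rectifying charts of \autoref{thm:IYa}, which are only canonical up to an affine change (\autoref{rem:affine}); I must check that the normalization \(A_{k}(y_{k})=x_{0}\), \(A_{k}(z_{k})=x_{k}\) makes \(A_{k}\) vary continuously with \(v\) independently of the chosen chart. Second, \(P\) is discontinuous at \(x_{0},\dots,x_{n+1}\), so I must verify that the finite orbit \((P\circ A_{k})^{l}(y_{k+1})\), \(l\le l_{0}\), avoids the jump set \(A_{k}^{-1}\{x_{0},\dots,x_{n+1}\}\) — equivalently that it lives in a region where \(P\circ A_{k}\) is continuous (near \(z_{k}\) the orbit approaches from the side \(A_{k}(z)\to x_{k}+0\), where~\eqref{eq:zk-attracts} applies). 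Once these two continuity points are secured, the super-attraction supplies the needed robustness and the lemma follows.
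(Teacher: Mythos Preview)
Your proposal is correct and follows essentially the same route as the paper: inductive surgery for nonemptiness (placing each new \(y_{k+1}\) inside the immediate basin of \(z_{k}\), with \(\mbM_{0}=\LEG_{0}\) as the base case) and a robustness/continuity argument for openness that the paper simply declares ``clear''. The only point the paper makes more explicit than you do is that each new Cherry cell is pasted so that the surgery leaves the saturation of \(\{x\le A_{n}(y_{n+1})\}\) untouched, which is what guarantees that the already-established conditions~\eqref{eq:z-attracts-y} for \(k<n\) survive the addition of the \((n+1)\)-st cell; your sketch leaves this step implicit.
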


\begin{proof}
    Clearly, each set \(\mbM_{n}\subset\LEG_{n}\) is relatively open in \(\LEG_{n}\).
    Let us prove that all these sets are nonempty.
    Briefly speaking, we use local surgery to  add the Cherry cells one by one, and place every \(y_{k+1}\) sufficiently close to~\(z_{k}\).

    First, we construct a vector field \(v\in\LEG_{0}\).
    Take a vector field with ensemble \enquote{lips} and no other degeneracies.
    Take two flow boxes inside the \enquote{lips};
    we paste a right lense and a saddle into one of them, and a Cherry cell into another one.
    Clearly, we can choose orientations on \(\Gamma_{1,2}\) so that this vector field belongs to~\(\LEG_{0}\).
    Note that \(\mbM_{0}=\LEG_{0}\) thus \(\mbM_{0}\) is nonempty.

    Once we have a vector field of class \(\mbM_{0}\), we add other Cherry cells one by one.
    Consider a vector field \(v_{0}\in\mbM_{n-1}\).
    It has \(n\) Cherry cells inside the \enquote{lips}, and~\eqref{eq:z-attracts-y} holds for all \(k=1,\dotsc,n-1\).
    Let us choose a location for the \((n+1)\)-th Cherry cell so that~\eqref{eq:z-attracts-y} holds for \(k=n\).
    Since \(x_{0}, x_{n}, y_{n}, z_{n}\) are already fixed, the affine map \(A_{n}\) is already fixed as well.

    Take a point \(y_{n+1}\in\Gamma_{2}\), \(y_{n+1}>z_{n}\) in the basin of attraction of \(z_{n}\) with respect to \(P_{n}\circ A_{n}\), where \(P_{n}\colon \Gamma_{1}\to\Gamma_{2}\) is the correspondence map for the vector field with \(n\) Cherry cells.
    This is possible due~to~\eqref{eq:zk-attracts}.
    Let \(U\) be the saturation of the interval \((x_{n}, A_{n}(y_{n+1}))\) by the trajectories of~\(v\).
    Since \(y_{n+1}\) is greater than all the points in \(U\cap\Gamma_{2}\), we can choose a flow box inside the lips and paste a Cherry cell into this flow box so that
    \begin{itemize}
      \item the incoming separatrix of the new Cherry cell meets \(\Gamma_{1}\) at a point \(x_{n+1}\), \(x_{n+1}>A_{n}(y_{n+1})\);
      \item the unstable separatrices of the new saddle \(L_{n+1}\) meet \(\Gamma_{2}\) at \(y_{n+1}\) and another point \(z_{n+1}>y_{n+1}\);
      \item the local surgery does not affect the vector field at the saturation of the set \(\set{x\le A_{n}(y_{n+1})}\) by trajectories.
    \end{itemize}
    The new vector field has \((n+1)\) Cherry cells, so it belongs to \(\LEG_{n}\).
    It satisfies \autoref{eq:z-attracts-y} for all \(k=1,\dotsc,n\), thus it belongs to \(\mbM_{n}\).
    By induction, all \(\mbM_{n}\) are nonempty.
\end{proof}
\subsection{Proof of \autoref{thm:LEG-func}}\label{sub:proof}
Consider a vector field \(v_{0}\in\mbM_{n}\), where \(\mbM_{n}\subset\LEG_{n}\) is the same as in \autoref{lem:technical}.
Let \(V=\set{v_{\alpha}}_{\alpha\in(\bbR^{D},0)}\), \(D\ge 4\), be an unfolding of~\(v_{0}\) transverse to \((\LEG_n, v_{0})\).
Fix \(k\), \(1\le k \le n\).
Note that \autoref{thm:LEG-num} immediately follows from \autoref{thm:eg-closure} and the following lemma.
\begin{lemma}%
    \label{lem:LEG-perturb}
    For each \(k=1,\dotsc,n\) there exists an arbitrarily small \(\alpha\) such that for the vector field \(v_{\alpha}\)
    \begin{enumerate}
      \item\label{it:rl-survives} the right lense \(\mcR\) survives;
      \item\label{it:sn-disappear} both saddlenodes disappear;
      \item\label{it:uk-s} the unstable separatrix \(u_{k}(\alpha)\) of \(L_{k}(\alpha)\) coalesces with the stable separatrix \(s(\alpha)\) of \(R(\alpha)\);
      \item\label{it:luk-lsk} the unstable separatrix \(l^{u}_{k}(\alpha)\) of \(L_{k}(\alpha)\) coalesces with the stable separatrix \(l^{s}_{k}(\alpha)\) of the same saddle forming the separatrix loop~\(l_{k}(\alpha)\);
      \item\label{it:ukp1-winds} \(u_{k+1}(\alpha)\) winds onto \(l_{k}(\alpha)\) in the forward time.
    \end{enumerate}
\end{lemma}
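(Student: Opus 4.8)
The plan is to realize the five simultaneous conditions of the lemma by exploiting the transversality of $V$ to $\LEG_n$ together with the affine-approximation machinery of \autoref{lem:affine-approx}. The vector field $v_0\in\mbM_n$ has two saddlenodes $S_1$, $S_2$ bounding the \enquote{lips}, and the key observation is that conditions~\ref{it:uk-s} and~\ref{it:luk-lsk} both ask the correspondence map $\Delta_\alpha=\Delta_{L,\alpha}\circ\Delta_{b,\alpha}\circ\Delta_{R,\alpha}\colon\Gamma_2\to\Gamma_1$ through the disappeared saddlenodes to send prescribed points of $\Gamma_2$ to prescribed points of $\Gamma_1$. Specifically, \ref{it:uk-s} says $\Delta_\alpha(y_k)=x_0$ (the image of $u_k$ must land on the incoming separatrix $s$ of the right lense $R$), and \ref{it:luk-lsk} says $\Delta_\alpha(z_k)=x_k$ (the image of $l^u_k$ must land back on the stable separatrix $l^s_k$ of $L_k$ itself). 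These are exactly the two interpolation conditions defining the affine map $A_k$ from the technical assumption. So the first step is to choose the sequence $\alpha_n\to 0$ from \autoref{lem:affine-approx} so that $\Delta_{\alpha_n}\to A_k$ in $C^1$; this simultaneously arranges, in the limit, conditions~\ref{it:uk-s} and~\ref{it:luk-lsk}.

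First I would make conditions~\ref{it:rl-survives} and~\ref{it:sn-disappear} precise in terms of the parameter. The right lense $\mcR$ is a codimension-one degeneracy (the separatrix loop $r$), and in the transverse unfolding $V$ it survives on a codimension-one subset of the parameter space; similarly each saddlenode disappearing is an open condition once we move $\eps(\alpha)$ to the appropriate sign. Restricting to the stratum on which $\mcR$ survives still leaves enough parameters, by transversality to the codimension-three \enquote{lips} submanifold, to move the three quantities $C_1(\alpha)$, $C_2(\alpha)$, $\Delta_{b,\alpha}(0)$ independently as in the proof of \autoref{lem:affine-approx}. Thus I would first pass to the subfamily where $\mcR$ survives and both saddlenodes have disappeared ($\eps>0$ on both), and then apply the affine approximation inside this subfamily to obtain $\Delta_{\alpha_n}\to A_k$.

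The heart of the argument is condition~\ref{it:ukp1-winds}: the next separatrix $u_{k+1}(\alpha)$ must wind onto the loop $l_k(\alpha)$ just created by~\ref{it:luk-lsk}. This is exactly where the technical assumption of \autoref{lem:technical} enters. Once $l_k$ is a genuine separatrix loop of $L_k$ with $\lambda_k>1$, the return map near the loop is a one-dimensional map conjugate to $P\circ A_k$ near its fixed point $z_k$, and by~\eqref{eq:zk-attracts} the loop attracts trajectories from the relevant side. The forward orbit of $u_{k+1}$ is governed by iterating $P\circ A_k$ starting from $y_{k+1}$, and the defining property~\eqref{eq:z-attracts-y} of $\mbM_n$ guarantees that ${(P\circ A_k)}^l(y_{k+1})\to z_k$. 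Hence $u_{k+1}$ accumulates on the loop, i.e.\ $\omega(u_{k+1})=l_k\cup\{L_k\}$. The subtlety I expect to be the main obstacle is passing from the \emph{limiting} statement $\Delta_{\alpha_n}\to A_k$ to the \emph{exact} loop at a nearby parameter: for fixed large $n$ the loop $l_k(\alpha_n)$ need not be closed exactly, so I would use an intermediate-value / continuity argument within the one-parameter curve $\mcE$ (where $\mcR$ survives) to find, near each $\alpha_n$, a parameter at which $l^u_k$ and $l^s_k$ coalesce exactly, while $C^1$-closeness of $\Delta$ to $A_k$ keeps $y_{k+1}$ inside the attracting basin of $z_k$ so that~\ref{it:ukp1-winds} persists. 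Openness of the basin condition~\eqref{eq:z-attracts-y} under $C^1$ perturbation of the correspondence map is what makes this robust, and this is the step that genuinely uses that we work in $\mbM_n$ rather than all of $\LEG_n$.
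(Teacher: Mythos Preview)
Your approach is essentially the paper's: restrict to the codimension-one stratum where the right lense survives, use \autoref{lem:affine-approx} inside that stratum so that $\Delta_\alpha$ approximates $A_k$, upgrade to exact equalities by continuity, and deduce condition~\ref{it:ukp1-winds} from~\eqref{eq:z-attracts-y} and $C^1$-closeness of $\Delta_\alpha$ to $A_k$. Two small corrections are worth making. First, the locus where $\mcR$ survives is a hypersurface (dimension $D-1\ge 3$), not a one-parameter curve; the symbol $\mcE$ from \autoref{sub:semi-stable:3-param} has no role here. Second, you must close \emph{both} conditions~\ref{it:uk-s} and~\ref{it:luk-lsk} exactly, not just the loop: this is a two-equation continuity argument in the remaining $\ge 3$ parameters (the paper's ``standard continuity arguments''), and once both hold exactly $\Delta_\alpha$ is automatically $C^1$-close to $A_k$, since $A_k$ is the unique affine map with those two interpolation values and $\Delta_\alpha$ is nearly affine.
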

Indeed, consider a vector field \(v_{\alpha}\) satisfying all conditions~\ref{it:rl-survives}--\ref{it:ukp1-winds} listed above, see \autoref{fig:lips-unfolded}.
Note that \((L_{k}(\alpha), l(\alpha), L_{k+1}(\alpha), u_{k+1}(\alpha), u_{k}(\alpha))\) is a left lense.
Since the unstable separatrix \(u_{k}(\alpha)\) of this left lense coalesces with the stable separatrix \(s(\alpha)\) of \(\mcR\), these two lenses form \enquote{ears} or \enquote{glasses} depending on their orientation.
Reference to~\autoref{thm:eg-closure} completes the proof.

\begin{figure}
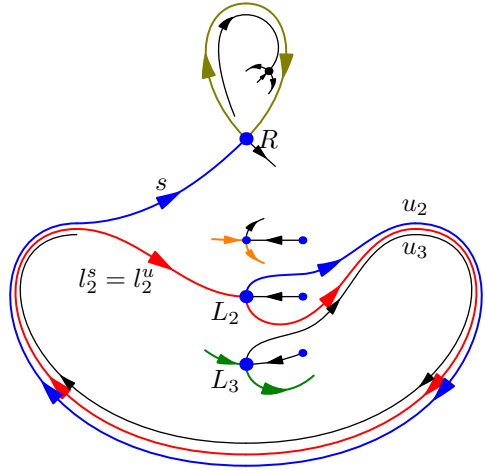

    \centering
    \asyinclude{lips-unfolded}
    \caption{An unfolding of a vector field \(v_{0}\in\LEG_{2}\) satisfying assertions of \autoref{lem:LEG-perturb} for \(k=2\)}\label{fig:lips-unfolded}
\end{figure}

\begin{proof}[Proof of~\autoref{lem:LEG-perturb}]
    The first condition defines a codimension one submanifold in the parameter space.
    From now on, we consider the restriction of our family to this submanifold.
    Despite ambiguity, we continue to use notation \(v_{\alpha}\) for this restriction.

    The second condition can be written as \(\eps_{1}(\alpha)>0\), \(\eps_{2}(\alpha)>0\), where \(\eps_{1}\) and \(\eps_{2}\) are the parameters provided by \autoref{thm:IYa} for \(S_{1}\) and \(S_{2}\), respectively.

    Conditions~\ref{it:uk-s} and~\ref{it:luk-lsk} can be written as \(\Delta_{\alpha}(y_{k}(\alpha))=x_{0}(\alpha)\) and \(\Delta_{\alpha}(z_{k}(\alpha))=x_{k}(\alpha)\), respectively, where \(\Delta_{\alpha}\) is the correspondence map through disappeared saddlenodes and along the bridge~\(b\).
    Due to~\autoref{lem:affine-approx}, \(\Delta_{\alpha}\) can approximate any affine map sending \(y_{k}(0)\) to a point near \(x_{0}(0)\) and \(z_{k}(0)\) to \(x_{k}(0)\).
    Now standard continuity arguments imply that there exists  arbitrarily small \(\alpha\) such that  \(\Delta_{\alpha}(y_{k}(\alpha))=x_{0}(\alpha)\) and \(\Delta_{\alpha}(z_{k}(\alpha))=x_{k}(\alpha)\), so conditions~\ref{it:uk-s} and~\ref{it:luk-lsk} are satisfied.
    Note that the maps \(\Delta_{\alpha}\) tend to the affine map \(A_{k}\) from \autoref{lem:technical} as \(\alpha\to 0\).

    The last condition means that \(y_{k+1}(\alpha)\) belongs to the basin of attraction of \(z_{k+1}(\alpha)\) with respect to the Poincaré map \(P_{\alpha}\circ\Delta_{\alpha}\), where \(P_{\alpha}\colon\Gamma_{1}\to\Gamma_{2}\) is the correspondence map through the lips, \(P_{0}=P\).
    As mentioned above, \(\Delta_{\alpha}\) is close to the affine map \(A_{k}\) from~\autoref{lem:technical}, hence~\eqref{eq:z-attracts-y} guarantees that the last condition is satisfied automatically.
\end{proof}

\section{Future plans}

\subsection{Families with few parameters}%
\label{sub:few-param}
On the one hand, a generic one-parameter family of vector fields on the sphere is structurally stable, see~\cite{MP,St18,YuINS,GIS-semistable}.
On the other hand, classification of locally generic three-parameter families can have a numerical invariant arising from a robustly invariant function, see~\cite{IKS-th1,GKS-glasses}, or a non-robust invariant with values in a larger space, see~\cite{GK-phase} and \autoref{thm:pc:seq}.
The following question is open:
\begin{itemize}
  \item Is a generic 2-parameter family of planar vector fields structurally stable?
\end{itemize}

To the best of our knowledge, all known types of locally generic 2-parameter bifurcations~\cite{Kuznetsov_2004,Roitenberg-thesis:transl,roit12:trans,Shashkov92, Dukov18-heart:en} are structurally stable.
We expect that the answer to the previous question is \enquote{yes}, but the abundant combinatorics of 2-parameter families makes it difficult to classify them.

For $3$- and $4$-parameter families, the following questions remain widely open:
\begin{itemize}
  \item Find more structurally unstable 3-parameter families, and new mechanisms for structural instability.
    All known examples are due to one and the same effect: orders of the parameter values corresponding to sparkling separatrix connections.

  \item Try to find functional invariants for 3-,4- parameter families;

  \item Try to find more mechanisms for the existence of functional invariants.
    All known examples exploit the same idea, see \autoref{prop:func-invariants}.
\end{itemize}

See also~\cite{YuI-surv} for a more detailed survey of current progress on these and other related questions.

\subsection{Local, non-local, and global  bifurcations}%
\label{sub:local-bifurcations}
Bifurcations of singular points of vector fields in their neighborhoods are called \emph{local} bifurcations.
Bifurcations in neighborhoods of polycycles are called \emph{non-local} bifurcations.
Here \enquote{non-local} refers to the space variable, not parameters of a family.
The bifurcations of \enquote{tears of the heart}, \enquote{ears}, \enquote{glasses}, and the bifurcations in this paper are neither local nor non-local:
separatrices that wind onto saddle loops of separatrix graphs play an important role in the bifurcations.
We call them \emph{global} bifurcations.

A natural question is, whether similar effects (structural instability, numerical and functional invariants) occur in local and semi-local bifurcations.

For non-local bifurcations, the answer is \enquote{yes}, see~\cite{DuYuI-nonlocal,Dukov-nonlocal}, while the following questions are widely open.
\begin{itemize}
  \item Can a finite-parameter \emph{local} bifurcation in a generic family be structurally unstable? Can it have functional invariants?
\end{itemize}

\subsection{Vector fields with no versal deformations}%
\label{sub:no-versal}

V.\,Arnold conjectured that any finite-codimension degenerate vector field on \(S^2\) has a \emph{versal deformation};
informally, this is the family in which the bifurcation occurs \enquote{in all possible ways}.

Formally, we say that a local family \(V=\set{v_{\alpha}}_{\alpha\in(\bbR^{k},0)}\) is a \emph{versal deformation} of \(v_{0}\), if any other local family \(\set{\tilde{v}_{\tilde{\alpha}}}_{\tilde{\alpha}\in(\bbR^{\tilde{k}},0)}\) such that \(\tilde{v}_{0}=v_{0}\) is weakly topologically equivalent to a family induced from~\(V\).
Equivalently, \(V\) is a versal deformation, if for any other local family \(\set{\tilde{v}_{\tilde{\alpha}}}_{\tilde{\alpha}\in(\bbR^{\tilde{k}},0)}\) there exists a germ of a continuous map \(h\colon(\bbR^{\tilde{k}}, 0)\to(\bbR^{k},0)\) such that \(v_{h(\tilde{\alpha})}\) is orbitally topologically equivalent to \(\tilde{v}_{\tilde{\alpha}}\) for all \(\alpha\).

The second author has an idea how to construct a counter-example to this conjecture.

\subsection{Do we really need a homeomorphism?}

While theorems in~\cite{IKS-th1,GKS-glasses} require \(h\) to be a homeomorphism, we expect that a similar statement is true for any continuous function~\(h\).
\begin{conjecture}%
    \label{conj:induced}
    Given two local families \(V\in{(\mbE\sqcup\mbG)}^{\pitchfork,k}\) and \(\tilde{V}\in{(\mbE\sqcup\mbG)}^{\pitchfork,\tilde{k}}\), suppose that there exists a~germ of a~continuous map~\(h\colon(\bbR^{k}, 0)\to(\bbR^{\tilde{k}},0)\) such that \(\tilde{v}_{h(\alpha)}\) is equivalent to \(v_{\alpha}\) for all \(\alpha\in(\bbR^{k},0)\).
    Then \(\varphi(v_{0})=\varphi(\tilde{v}_{0})\), where \(\varphi\) is~given~by~\eqref{eq:eg:phi}.
\end{conjecture}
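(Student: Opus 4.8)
The plan is to recover $\varphi(v_0)$ as a limit of quantities that depend only on the orbital topological type (with $Sep$-tracking) of \emph{individual} perturbed fields $v_\alpha$, so that the hypothesis $\tilde v_{h(\alpha)}\sim v_\alpha$ transports them regardless of whether $h$ is invertible. The reason one cannot simply quote \autoref{thm:ears-glasses} is exactly that $h$ need not be a homeomorphism (and $k\neq\tilde k$ is allowed), so the distinguished one-parameter subfamilies of $V$ and $\tilde V$ cannot be matched by a homeomorphism of parameter germs. I would sidestep this by working with intrinsic combinatorial data attached to single phase portraits, rather than with the \emph{order} of bifurcations along a path.

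The first step is to isolate the combinatorial carrier of $\varphi$. Recall the mechanism behind \autoref{thm:ears-glasses}: restricting a transverse unfolding to the locus where both loops $l$ and $r$ survive, one finds two sequences of sparkling separatrix connections accumulating at $v_0$, produced by the inner separatrices $\gamma_L$ and $\gamma_R$ winding onto $l$ and $r$. To each perturbed field $v_\alpha$ lying on such a connection I would attach the pair of winding numbers $(p,q)$ of $\gamma_L$ about $l$ and of $\gamma_R$ about $r$. These numbers are read off the marked separatrix skeleton and are therefore preserved by any orbital equivalence with $Sep$-tracking, so they are invariants of the single vector field $v_\alpha$. The analytic content of \autoref{thm:ears-glasses}, re-expressed through the Dulac (saddle-passage) asymptotics with rates $\ln\lambda$ and $\ln\rho$, is that for every transverse unfolding an explicit ratio of these winding numbers of nearby connections tends, as the connection approaches $v_0$, to $\varphi(v_0)$ with the normalization of \eqref{eq:eg:phi}. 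I would establish this as a property of \emph{all} sufficiently deep sparkling connections, not of an enumerated subsequence.

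The second step is transport by $h$. By transversality I would choose $\alpha_n\to0$ in the parameter space of $V$ so that $v_{\alpha_n}$ realizes a sparkling connection of winding type $(p_n,q_n)$ with $p_n/q_n\to\varphi(v_0)$. Since $\tilde v_{h(\alpha_n)}$ is orbitally equivalent with $Sep$-tracking to $v_{\alpha_n}$, it realizes a sparkling connection with the \emph{same} winding type $(p_n,q_n)$, and continuity of $h$ gives $h(\alpha_n)\to0$, so these fields approach $\tilde v_0$. The asymptotic of the first step, now applied to the transverse unfolding $\tilde V$ of $\tilde v_0$, forces $p_n/q_n\to\varphi(\tilde v_0)$ along this very sequence. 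Comparing the two limits yields $\varphi(v_0)=\varphi(\tilde v_0)$.

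I expect the main obstacle to be the first step: proving rigorously that a ratio of winding numbers converges to $\varphi$, and that these winding numbers are genuine $Sep$-tracking invariants. The former amounts to recasting \autoref{thm:ears-glasses} in a \enquote{pointwise} (single-field) form, counting the turns each inner separatrix makes before the connection rather than ordering the connections along a path; the Dulac estimates are standard, but the bookkeeping relating turn counts to $\ln\lambda$ and $\ln\rho$ must be done with care. The latter requires showing that the arrangement of the marked separatrices in the complement of the loops determines the winding numbers, so that a homeomorphism respecting the marking preserves them. A secondary point to verify is that a transverse unfolding of $\tilde v_0$ realizes, arbitrarily close to $\tilde v_0$, connections of every sufficiently deep winding type, so that the sequence transported from $V$ genuinely lies among the sparkling connections of $\tilde V$ and the convergence $p_n/q_n\to\varphi(\tilde v_0)$ applies to it.
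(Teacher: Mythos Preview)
This statement is \autoref{conj:induced}, which the paper leaves as an \emph{open conjecture}; there is no proof in the paper to compare your proposal against. The authors explicitly remark that the existing proof of \autoref{thm:ears-glasses} proceeds by computing the \emph{relative density} of two sequences of sparkling connections in the parameter space, and that this argument breaks down when $h$ is not bijective. They add only that ``a more detailed study of the bifurcation scenario'' should suffice.

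Your strategy---replacing the relative-density argument by per-field combinatorial invariants (winding numbers $(p,q)$ readable from the marked separatrix skeleton) and transporting them through $h$ by continuity alone---is exactly the kind of reformulation the paper gestures at, and the logical structure of your second step is sound: if the ratio $p_n/q_n$ converges to $\varphi$ for \emph{every} sequence of sparkling connections approaching the base point in \emph{any} transverse unfolding, then the argument goes through without inverting $h$.

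The genuine gap, which you yourself flag, is the first step. The known proof in \cite{GKS-glasses} does not extract $\varphi$ from a single connection's winding data; it extracts $\varphi$ from how two interleaved sequences of connections are ordered along a one-dimensional parameter curve. Recasting this as a statement that $p/q\to\varphi$ for the turn counts of $\gamma_L$ and $\gamma_R$ on an individual connection is a new analytic claim, not a repackaging. You would need to show that (i) a sparkling connection near $v_0$ forces a specific asymptotic relation between $p$ and $q$ governed by $\ln\lambda$ and $\ln\rho$, and (ii) this relation is tight enough that the ratio has a limit, not merely bounded oscillation. Until that is done, your proposal is a plausible program for attacking the conjecture rather than a proof, which is consistent with the paper's own assessment that the question is open.
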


The original proof of \autoref{thm:ears-glasses} relies on computing the \emph{relative density} of two sequences of points in the parameter space.
These proofs fail, if \(h\) is not required to be bijective.
We think that a more detailed study of the bifurcation scenario will allow us to prove \autoref{conj:induced}.

One of the reasons to study this question is that Arnold's definition of a versal deformation involves families related as described in \autoref{conj:induced}.

Another reason to ask the same question comes from the category theory interpretation of invariant functions and functional invariants.
Consider \(\mcV_{k}\) as a category: a morphism between two families \(V\), \(\tilde V\) is a germ of a continuous map \(h\) such that \(\tilde v_{h(\alpha)}\) is equivalent~to~\(v_{\alpha}\).
Two families are equivalent in the sense of \autoref{def:tracking}, if they are isomorphic in this category.

Let \(\SD_{k,d,D}\) be the category of germs of smooth maps \(f\colon(S_{f}, 0)\to(\bbR^{D}, f(0))\) defined on germs of smooth \(d\)-dimensional submanifolds \((S_{f}, 0)\subset(\bbR^{k},0)\).
A morphism between two germs \(f\), \(\tilde f\) is a germ of a continuous map \(\chi\colon(S_{f}, 0)\to(S_{\tilde f}, 0)\) such that \(f=\tilde{f}\circ\chi\).

Let \(\mbM\) be one of the Banach manifolds with robustly invariant functions; e.g., \(\mbM=\mbE\sqcup\mbG\).
The space \(\mbM^{\pitchfork, k}\) inherits the category structure from \(\mcV_{k}\).

Recall that the \emph{core} of a category is the subcategory with the same objects, and morphisms defined to be the isomorphisms of the original category.
The relation~\eqref{eq:f_V-eq} implies that \(V\mapsto f_{V}\) is a functor from the core of \(\mbM^{\pitchfork, k}\) to the core of \(\SD_{k,d,D}\).
In particular, this map sends isomorphic objects to isomorphic objects, i.e., equivalent families of vector fields to equivalent germs.
So it seems natural to ask whether this map actually defines a functor between the original categories as well.
Due to the arguments from \autoref{sub:func-invs}, this question is equivalent to \autoref{conj:induced}.

\section{Acknowledgements}
We are thankful to Yulij Ilyashenko who formulated the problem and provided a continuous support to our work.
We are also grateful to Vladimir Roitenberg, Lev Lerman, Victor Kleptsyn, and Konstantin Khanin for consultations.
Many thanks to Nikita Solodovnikov who came up with \enquote{ears} and \enquote{glasses} separatrix graphs, which is the topic of our previous paper with him and the basis for the current paper.

\printbibliography
\end{document}